\long\def\symbolfootnote[#1]#2{\begingroup%
\def\thefootnote{\fnsymbol{footnote}}\footnote[#1]{#2}\endgroup} 
\newcommand{\ffoot}[1]{}
\newtheorem{theorem}{Theorem}
\newtheorem{corollary}[theorem]{Corollary}
\newtheorem{lemma}[theorem]{Lemma}
\newtheorem{proposition}[theorem]{Proposition}
\theoremstyle{definition} 
\newtheorem{definition}[theorem]{Definition}
\newtheorem{example}[theorem]{Example}
\theoremstyle{remark}
\newtheorem{remark}[theorem]{Remark}
\newcommand{\bt}{\begin{theorem}}
\newcommand{\et}{\end{theorem}}
\newcommand{\bl}{\begin{lemma}}
\newcommand{\el}{\end{lemma}}
\newcommand{\bp}{\begin{proposition}}
\newcommand{\ep}{\end{proposition}}
\newcommand{\bc}{\begin{corollary}}
\newcommand{\ec}{\end{corollary}}
\newcommand{\bdeff}{\begin{definition}}
\newcommand{\edeff}{\end{definition}}
\newcommand{\brem}{\begin{remark}}
\newcommand{\erem}{\end{remark}}
\newcommand{\bex}{\begin{example}}
\newcommand{\eex}{\end{example}}
\newcommand{\bR}{\mathbb{R}}
\newcommand{\be}{\begin{equation}}
\newcommand{\ee}{\end{equation}}
\newcommand{\mS}{\mathbb{S}}
\newcommand{\lp}{\left(}
\newcommand{\rp}{\right)}
\newcommand{\lb}{\left[}
\newcommand{\rb}{\right]}
\newcommand{\lc}{\left\{}
\newcommand{\rc}{\right\}}
\newcommand{\Hess}{\text{Hess}}
\DeclareMathOperator{\Cut}{Cut}
\DeclareMathOperator{\Con}{Con}
\renewcommand{\phi}{\varphi}
\newcommand{\blue}[1]{\textcolor{blue}{#1}}
\newcommand{\bi}{\begin{itemize}}
\newcommand{\iii}{\item}
\newcommand{\ei}{\end{itemize}}
\newcommand{\bd}{\begin{description}}
\newcommand{\ed}{\end{description}}
\newcommand{\bqn}{\begin{eqnarray}}
\newcommand{\eqn}{\end{eqnarray}}
\newcommand{\eqnn}{\nonumber\end{eqnarray}}
\newcommand{\nn}{\nonumber}
\newcommand{\ba}[1]{\begin{array}{#1}}
\newcommand{\ea}{\end{array}}
\newcommand{\lam}{\lambda}
\newcommand{\g}{\gamma}
\newcommand{\eps}{\varepsilon}
\newcommand{\R}{\mathbb{R}}
\newcommand{\N}{\mathbb{N}}
\newcommand{\mc}[1]{\mathcal{ #1 }}
\newcommand{\all}{\forall\,}
\newcommand{\la}{\langle}
\newcommand{\ra}{\rangle}
\newcommand{\virg}[1]{``#1''}
\newcommand{\tx}[1]{\mathrm{#1}}
\newcommand{\til}[1]{\widetilde{#1}}
\newcommand{\distr}{\mc{D}}
\newcommand{\metr}{\textsl{g}}
\newcommand{\Pg}[1]{\left\{ #1 \right\}}
\newcommand{\hp}{hypothesis}
\newcommand{\EXP}{\mc{E}}
\newcommand{\Exp}{\mc{E}}
\newcommand{\lapl}{\Delta}
\newcommand{\dive}{\text{div}}
\newcommand{\HH}{\mc{H}}
\newcommand{\tcon}{t_{con}}
\newcommand{\tcut}{t_{cut}}
\begin{document}
\begin{center} \noindent
{\LARGE{\sl{\bf On the heat diffusion for generic Riemannian and sub-Riemannian structures}}}

\vskip 0.6 cm
Davide Barilari, Ugo Boscain, Gr\'egoire Charlot, Robert W. Neel

\vskip 0.5cm
\today
\end{center}

\vskip 0.3 cm
\begin{abstract} 
In this paper we provide the small-time heat kernel asymptotics at the cut locus in three relevant cases:  generic low-dimensional Riemannian manifolds, generic 3D contact sub-Riemannian manifolds (close to the starting point) and generic 4D quasi-contact sub-Riemannian manifolds (close to a generic starting point). As a byproduct, we show that, for generic low-dimensional Riemannian manifolds, the only singularities of the exponential map, as a Lagragian map, that can arise along a minimizing geodesic are $A_3$ and $A_5$ (in the classification of Arnol'd's school).  We show that in the non-generic case, a cornucopia of asymptotics can occur, even for Riemannian surfaces.
\end{abstract}

{\small {\bf MSC classes}:	53C17 $\cdot$	57R45 $\cdot$ 58J35 }\\[-0.3cm]

{\small {\bf Keywords}: generic sub-Riemannian geometry, heat kernel asymptotics, singularity theory.}

\section{Introduction}

Let $M$ be a complete Riemannian or sub-Riemannian manifold.\footnote{In this paper, by sub-Riemannian manifold, we mean a constant rank sub-Riemannian manifold which is not Riemannian. By a (sub)-Riemannian manifold, we mean 
 a constant-rank sub-Riemannian manifold, which is possibly Riemannian. However, several results of the paper hold for more general rank-varying structures in the sense of \cite[Appendix A]{srneel}, for instance, Grushin-like structures. This will be specified in the paper.}
Endow this structure with a smooth volume $\mu$ and consider the associated Laplace operator $\lapl$ defined as the divergence of 
the (horizontal) gradient. Under the assumption that the H\"ormander condition is satisfied (which is obvious in the Riemannian case), 
and thanks to the completeness of $M$, the operator $\lapl$ is hypoelliptic and admits a smooth symmetric heat kernel $p_{t}(q_1,q_2)$ (see \cite{hormander,strichartz}).

The problem of relating the small-time asymptotics of $p_{t}(q_1,q_2)$ with the properties of the (sub)-Riemannian distance $d$ attracted 
attention starting from the 40s in the Riemannian case \cite{mina53,mina49}, and then starting from the 80s for the sub-Riemannian 
one \cite{benarous,leandremaj,leandremin}.
See \cite{berger,rosenberg} for a historical viewpoint on the Riemannian heat equation, and \cite{grigoryan} for a discussion about stochastic completeness of Riemannian manifolds.  

In this paper we are interested in this problem ``off-diagonal,'' namely when $q_1\neq q_2$. 
Let $\Cut(q_1)$ be the set of points where geodesics starting from $q_1$ lose their global optimality.
Notice that, when there are no abnormal minimizers, $\Cut(q_1)\cup\{q_1\}$ (without $\{q_1\}$ in the Riemannian case) corresponds to the set of points where the function $d^{2}(q_1,\cdot)$ is not smooth. 
In 1988  Ben-Arous \cite{benarous} proved that for $q_2\notin\Cut(q_1)\cup\{q_1\}$ and when there are no abnormal minimizers, one has the small-time heat kernel expansion (here $n$ is the dimension of the manifold)
\begin{equation}\label{eq:stimaCut}
p_t(q_1,q_2) = \frac{C+O(t)}{t^{n/2}}  e^{-d^{2}(q_1,q_2)/4t},
\end{equation}
for some $C>0$ depending on $q_1$ and $q_2$.
This extended known results  in the Riemannian context (see  Molchanov \cite{molcanov}, Azencott \cite{azencott}, Berger \cite{berger} and reference therein). 

The behavior of $p_{t}(q_1,q_2)$ in the case $q_2\in \Cut(q_1)$ was  a long-standing open problem. In Riemannian geometry it was investigated  
in \cite{molcanov}, with further development in \cite{neel,neelstroock,srneel}. 

In the sub-Riemannian case, few properties of the cut locus were known before the mid-90s. Motivated by the need to better understand the 
structure of the cut locus, arising from the work of Ben Arous and Leandre, a precise description of the local structure of the cut locus 
for  generic 3D contact structures and 4D generic quasi-contact structures was given by the control theory community, see \cite{agrexp,gauthiercontact,grisha-zaka,QUASI-CONTACT}.

However the connection between the structure of the singularity and the heat kernel asymptotics remained unclear. Recently substantial 
progress in this direction has been made in \cite{srneel}, by adapting and further developing the techniques of Molchanov \cite{molcanov}. 
In particular, in \cite{srneel} it was proved that the asymptotic is different from that of Equation \eqref{eq:stimaCut} if and only if $q_{2}$ is \emph{cut-conjugate} to $q_{1}$ (i.e. $q_2\in \Cut(q_{1})$ and $q_{2}$ is conjugate 
to $q_1$ along at least one optimal geodesic).\footnote{We recall that, in absence of abnormals, a geodesic cannot be conjugate before it reaches the cut locus.} In this case we have the bounds, for small $t$:
\begin{equation}\label{eq:stimaconj}
\frac{C}{t^{n/2+1/4}}  e^{-d^{2}(q_1,q_2)/4t} \leq p_t(q_1,q_2)
\leq\frac{C'}{t^{n-(1/2)}}  e^{-d^{2}(q_1,q_2)/4t},
\end{equation}
for some $C,C'>0$ depending on $q_1$ and $q_2$.
More precise estimates can be obtained if one has a precise description of ``how much'' $q_2$ is conjugate to $q_1$ (cf. Theorem \ref{t:zele}).

The main purpose of this paper is to continue this analysis by giving the precise leading term for three cases of interests

\bi
\iii[(i)] generic Riemannian manifolds of dimension less than or equal to 5 (starting from a fixed point).
\iii[(ii)] generic 3D contact sub-Riemannian manifolds (for $q_2$ close enough to $q_1$)
\iii[(iii)] generic 4D quasi-contact sub-Riemannian manifolds (for $q_2$ close enough to $q_1$, with generic $q_1$)
\ei
In the generic Riemannian case we stop at dimension 5 because up to dimension 5 the singularities of the exponential map are classified and they are structurally stable, see Section \ref{s-riemannian}. As a byproduct of our analysis we get that the list of possible singularities arising generically on minimizing geodesics in the 
Riemannian case is rather restricted, see  Theorem\ \ref{t:tgv0} below. Moreover we get a refinement of the main result \cite[Theorem 27]{srneel}. See Theorems \ref{t-ogrande} and \ref{t:zele}.

The results given for cases (ii) and (iii) answer the question mentioned above about the heat kernel asymptotics at the 
sub-Riemannian cut locus which originally motivated many efforts by the control theory community in sub-Riemannian geometry.

In a complementary direction, we show that a variety of aymptotics can occur even in a two-dimensional Riemannian manifold if one looks to non-generic cases.

For simplicity of exposition, to state our main results, we separate the Riemannian and sub-Riemannian cases.

\subsection{The Riemannian case}\label{s-riemannian}

In this paper we always assume that the Riemannian metrics are complete. In addition, the following definition specifies what we mean by a generic Riemannian metric. 
\bdeff
\label{d-generic}
Let $M$ be a smooth manifold and  $\mc{G}$ be the set of all complete Riemannian metrics on $M$ endowed with the $C^{\infty}$ Whitney topology.
When we say that \emph{for a generic Riemannian metric on a manifold $M$ the property} (P) \emph{is satisfied} we mean that the property (P) is satisfied on an open and dense subset of the set $\mc{G}$.
\edeff
In the following,  $\Exp_q$ denotes the exponential map based at the point $q\in M$, thought as a map that, with an initial direction $v$ (i.e., $v\in T_qM$, $|v|=1$) and a time $t$, associates the end point $\Exp_q(t v)$ of the arc-length parameterised geodesic defined on $[0,t]$, starting at $q$ with direction $v$.

Conjugate points along geodesics correspond to singularities of the exponential map, which is a Lagrangian\ffoot{\blue{COMMENT: do we define what is a lagrangian map? Davide: YES}} map. 
Thanks to the work of Arnol'd and his group (see \cite{Arnold} and references therein), we know the list of generic Lagrangian singularities
up to dimension 5, which coincides with the list of stable Lagrangian singularities up to dimension 5.

\bt[Arnol'd's school: normal form of generic singularities of Lagrangian maps
] \label{t:list}
Let $f$ be a generic Lagrangian map from $\R^n$ to $\R^n$ which is singular\ffoot{\blue{COMMENT:do we define what is singular??? Davide: NO}} at a point $q$. Then, there exist  changes of coordinates around $q$ and $f(q)$ such that in the new coordinates $q=0$ and:
\bi 
\item if $n=1$ then $f$ is the map\\  
$x\mapsto x^2$
(in this case the singularity is called $A_2$);
\item if $n=2$ then $f$ is the map\\
$(x,y)\mapsto(x^3+xy,y)$ ($A_3$)\\ or  a suspension\ffoot{\blue{COMMENT:do we have to define what is a suspension. Davide: YES}} of the previous one;
\item if $n=3$ then $f$ is the map\\
$(x,y,z)\mapsto(x^4+x^{2}y+xz,y,z)$ ($A_4$),\\
or $(x,y,z)\mapsto(x^2+y^2+xz,xy,z)$ ($D_4^+$),\\
or $(x,y,z)\mapsto(x^2-y^2+xz,xy,z)$ ($D_4^-$),\\
or  a suspension of the previous ones;
\item if $n=4$ then $f$ is the map\\
$(x,y,z,t)\mapsto(x^5+x^{3}y+x^{2}z+xt,y,z,t)$ ($A_5$),\\
or $(x,y,z,t)\mapsto(x^3+y^2+x^2z+xt,xy,z,t)$ ($D_5^+$),\\
or $(x,y,z,t)\mapsto(-x^3+y^2+x^2z+xt,xy,z,t)$ ($D_5^-$),\\
or  a suspension of the previous ones;
\item if $n=5$ then $f$ is the map\\
$(x,y,z,t,u)\mapsto(x^6+x^{4}y+x^{3}z+x^{2}t+xu,y,z,t,u)$ ($A_6$),\\
or $(x,y,z,t,u)\mapsto(x^4+y^2+x^3z+x^{2}t+xu,xy,z,t,u)$ ($D_6^+$),\\
or $(x,y,z,t,u)\mapsto(-x^4+y^2+x^3z+x^{2}t+xu,xy,z,t,u)$ ($D_6^-$),\\
or $(x,y,z,t,u)\mapsto(x^2+xyz+ty+ux,y^3+x^2z+tx,z,t,u)$ ($E_6^+$),\\
or $(x,y,z,t,u)\mapsto(x^2+xyz+ty+ux,-y^3+x^2z+tx,z,t,u)$ ($E_6^-$),\\
or  a suspension of the previous ones.
\ei
\et
Here by a suspension of a map $f:\R^{n}\to \R^{n}$ we mean a map of the form $\til{f}:\R^{n+n'}\to \R^{n+n'}$ defined by $\til{f}(x,y)=(f(x),y)$ where $x\in \R^{n}$ and $y\in \R^{n'}$.

It has  been long believed that singularities of generic Riemannian exponential maps are generic Lagrangian singularities.
 This was stated by Weinstein \cite{weinstein} and confirmed in Wall \cite{ctcwall}. A complete proof appears in \cite{polish} (see also \cite{klok}).


More precisely we have the following result.
\bt
\label{t:weinstein}
Let $M$ be a smooth manifold with $\dim M\leq 5$, and fix $q\in M$.
For generic Riemannian metrics on  $M$, the singularities of the exponential map $\Exp_{q}$ are those listed in Theorem \ref{t:list}.
\et

A preliminary question to the study of the heat kernel asymptotics at generic singularities is to understand 
which singularities of the Arnol'd list can be obtained as ``optimal'' ones. In other words, we consider singularities of the 
exponential map that can arise along minimizing geodesics. 

\bdeff Let $M$ be a Riemannian manifold, $q_1,q_2\in M$ such that $\gamma(t)=\Exp_{q_1}(tv)$ for $0\leq t\leq 1$ gives a minimizing conjugate geodesic 
from $q_1$ to $q_2$. Then we say that $\gamma$ is \emph{$A_2$-conjugate}  if at $v$, $\Exp_{q_1}$ has a normal form given by $A_2$. 
We define $A_3$-conjugacy, etc.\ in a similar way.
\edeff
In Section \ref{s-teorema-minimalita} we prove the following result.
\bt\label{t:tgv0}
Let $M$ be a smooth manifold, $\dim M\leq 5$, and $q_1\in M$. For a generic Riemannian metric on $M$  
and any minimizing geodesic $\gamma$ from $q_1$ to some $q_2$ we have that $\gamma$ is either non-conjugate, $A_{3}$-conjugate, 
or $A_{5}$-conjugate (at $q_2$).
\et
Notice that $A_{3}$ appears only for $n\geq 2$ and $A_{5}$ for $n\geq 4$.
As a consequence of Theorem \ref{t:tgv0} and of Theorem \ref{t-ogrande} in Section \ref{s-techniques} (which gives a finer analysis than that presented in \cite{srneel} of the relation between the degree of conjugacy and the small-time heat kernel asymptotics), we get

\bc\label{c:tgv1} Let $M$ be a smooth manifold, $ \dim M=n\leq 5$, and $q_1\in M$. For a generic Riemannian metric on $M$ the only possible heat kernel asymptotics are 
(here $C>0$ is some  constant which can differ from line to line):
\bi
\iii[(i)] If no minimizing geodesic from $q_1$ to $q_2$ is conjugate then
$$ p_{t}(q_1,q_2)=\frac{C+O(t)}{t^{\frac{n}{2}}} e^{-d^2(q_1,q_2)/4t},$$
\iii[(ii)] If at least one minimizing geodesic from $q_1$ to $q_2$ is $A_{3}$-conjugate but none is $A_{5}$-conjugate
$$p_{t}(q_1,q_2)=\frac{C+O(t^{1/2})}{t^{\frac{n}{2}+\frac{1}{4}}} e^{-d^2(q_1,q_2)/4t},$$
\iii[(iii)] If at least one minimizing geodesic from $q_1$ to $q_2$  is $A_{5}$-conjugate
$$p_{t}(q_1,q_2)=\frac{C+O(t^{1/3})}{t^{\frac{n}{2}+\frac{1}{6}}} e^{-d^2(q_1,q_2)/4t}.$$
\ei
\ec
Notice that this is consistent with the results obtained in \cite{jacek} on surfaces of revolution.

\subsection{The sub-Riemannian case} \label{s:srcase}
Some of the techniques developed to prove Theorems \ref{t:tgv0} and Corollary \ref{c:tgv1} can be adapted to the sub-Riemannian context. 

As in the Riemannian case, we always assume that the sub-Riemannian metrics are complete as metric spaces.\ffoot{\blue{COMMENT:If there are no abnormal minimisers, then completeness and geodesic completeness are equivalent, see \cite{nostrolibro}, VERIFY THE PRECISE STATEMENT.}} Moreover, when we say generic, we always refer to open and dense subsets with respect to the $C^{\infty}$  Whitney topology (see Definition \ref{d-generic}).\ffoot{\blue{COMMENT:do we have to specify that the distribution is fixed?}}

\subsubsection{3D contact case}
In contact geometry, there are no nontrivial abnormal extremals (for the definition of abnormals see Remark \ref{r:strict}). For the generic 3D contact case,  in \cite{agrexp,gauthiercontact} it is shown that close to the diagonal only singularities 
of type $A_{3}$ appear, accumulating to the initial point. The local structure of the first conjugate locus (see Fig. \ref{f-rd-4cusps-6cusps})\ffoot{\blue{COMMENT:MAYBE PUT A FIGURE OF THE SECTION}} is either a suspension 
of a four-cusp astroid (at generic points) or a suspension of a ``six-cusp astroid'' (along some special curves). For the four-cusp case, 
two of the cusps are reached by cut-conjugate geodesics, while in the six-cusp case this happens for three of them. 
Notice that the first conjugate locus at a generic point looks like a suspension of the first conjugate locus that one gets on a Riemannian ellipsoid \cite{jacek}.\ffoot{\blue{COMMENT:CITARE ANCHE ALTRE COSE SULL"ELLISSE}} 
The precise statement of these facts can be found  in \cite{agrexp,gauthiercontact} (see also \cite{nostrolibro}).

\begin{figure}
\begin{center}
\scalebox{0.9}{
\input{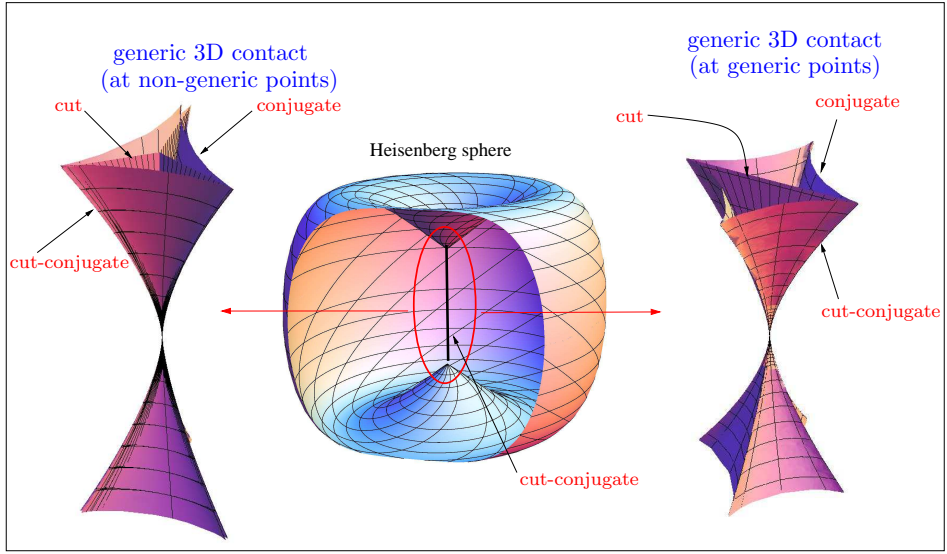}}
\label{f-rd-4cusps-6cusps}
\caption{Cut locus and first conjugate locus in a generic 3D contact  sub-Riemannian structure close to the starting point. 
The picture in the center shows the endpoints of optimal geodesics at time 1 for the Heisenberg group, which is the simplest 3D contact sub-Riemannian structure. The vertical axis is the cut locus, which coincides with the first conjugate locus. 
The picture on the right shows the cut locus and the first conjugate locus after a generic perturbation at a generic point. The first conjugate locus is a suspension of a four-cusp astroid. The picture on the left shows the cut locus and the first conjugate locus after a generic perturbation at a non-generic point. The first conjugate locus is a suspension of a ``six-cusp astroid.''}
\end{center}
\end{figure}

As a consequence of this analysis and of Theorem \ref{t-ogrande} in Section \ref{s-techniques} we get the following characterization of the heat kernel asymptotics for the 3D contact case.

\bc 
\label{c:sarrusofono1}
Let $M$ be a smooth manifold of dimension $3$.  Then for a generic 3D contact sub-Riemannian metric on $M$, every $q_1$, and every $q_2$ 
(close enough to $q_1$) we have that the only possible heat kernel asymptotics are (here $C>0$ is some  constant which can differ from line to line):
\bi
\iii[(i)] If no minimizing geodesic from $q_1$ to $q_2$ is conjugate then
$$ p_{t}(q_1,q_2)=\frac{C+O(t)}{t^{3/2}} e^{-d^2(q_1,q_2)/4t},$$
\iii[(ii)] If at least one minimizing geodesic from $q_1$ to $q_2$ is conjugate then
$$p_{t}(q_1,q_2)=\frac{C+O(t^{1/2})}{t^{7/4}} e^{-d^2(q_1,q_2)/4t}.$$
\ei
Moreover, there are points $q_2$ arbitrarily close to $q_1$ such that case (ii) occurs. 
\ec

Notice that exponents of the form $N/4$, for integer $N$, were unexpected in the 90s literature, see \cite{leandre}.

\subsubsection{4D quasi-contact case}

For generic 4D quasi-contact metrics there are no non-trivial abnormal minimizers \cite{agrachevjp} (for non-generic structures, 
nontrivial abnormal minimizers can exist but they are not strict).  The structure of the local first conjugate locus is known for generic points, see \cite{QUASI-CONTACT}. 
More precisely, outside of a stratified set of codimension 1, the singularities corresponding 
to the first conjugate locus are of type $A_2$, $A_3$, and $D_4^+$. Moreover, among the points of the first conjugate locus, 
those that are cut-conjugate correspond to $A_3$ singularities, and they accumulate to the initial point.

Thanks to this analysis and to Theorem \ref{t-ogrande} (in Section \ref{s-techniques}) we get
\bc 
\label{c:sarrusofono2}
Let $M$ be a smooth manifold of dimension $4$.  Then, for a generic quasi-contact sub-Riemannian metric on $M$,
for $q_1$ outside a stratified set of codimension 1 and $q_2$ close enough to $q_1$, the only possible heat kernel asymptotics are 
(here $C>0$ is some constant which can differ from line to line):
\bi
\iii[(i)] If no minimizing geodesic from $q_1$ to $q_2$ is conjugate then
$$ p_{t}(q_1,q_2)=\frac{C+O(t)}{t^2} e^{-d^2(q_1,q_2)/4t},$$
\iii[(ii)] If at least one minimizing geodesic from $q_1$ to $q_2$ is conjugate then 
$$p_{t}(q_1,q_2)=\frac{C+O(t^{1/2})}{t^{9/4}}e^{-d^2(q_1,q_2)/4t}.$$
\ei
Moreover, there are points $q_2$ arbitrarily close to $q_1$ such that case (ii) occurs. \ffoot{work on the quasi contact nilpotent case for the abnormal}


\ec

\subsection{Techniques}
\label{s-techniques}
The analysis of the heat kernel asymptotics in the three cases of
interest just discussed consists of a few steps. First, the structure
of the minimizing geodesics must be understood. The possible generic
singularities of the exponential map for the 3D contact and 4D
quasi-contact cases were essentially already known, and we have
already summarized those results. In the Riemannian case, this means
that we wish to prove Theorem \ref{t:tgv0}, which is of independent
interest (in particular, compare the list of possible singularities
occurring for \emph{minimizing} geodesics with the much longer list of
singularities that can occur for geodesics past the point of being
minimizing, as given in Theorems \ref{t:list} and \ref{t:weinstein}).
To do this, we consider the hinged energy function (see \cite{molcanov,neel,srneel})
\[
h_{q_1,q_2}(q) = \frac{1}{2}\lp d^2\lp q_1,q \rp + d^2\lp q,q_2 \rp\rp.
\]
This function achieves its minimum of $d^2(q_1,q_2)/4$ precisely at
the midpoints of minimizing geodesics from $q_1$ to $q_2$ \cite[Lemma 21]{srneel}. Further, the
local structure of $h=h_{q_1,q_2}$ near these midpoints is closely
related to the structure of the exponential map along the
corresponding geodesic. In particular, because $h$ has a minimum at
such a midpoint, its ``normal form'' is more restricted than at an
arbitrary critical point (for example, in dimension two, $h$ cannot have a normal form of the type $x^2+y^3$).
 Transferring these restrictions from $h$ to the
exponential map allows us to show that most of the singularities in
Theorem \ref{t:list} cannot occur at minimizing geodesics. The details
are given in Section \ref{s-technical}.

Once we have a description of the possible singularities that can
appear along a minimizing geodesic for a given generic geometry, we then
determine the corresponding normal form for the singularity of $h$ at
the midpoint of such a geodesic. 
Having determined the possible
structures of $h$, 
we determine the corresponding heat-kernel asymptotics in the spirit of \cite{srneel}. 
However we actually need a finer result than was given there.

This is of some independent interest, so we elaborate on it
here. We will consider minimizing geodesics such that the exponential map
has a singular point of type $(1,m)$ (see Definition \ref{d:sing}); such
geodesics include those which are $A_m$-conjugate. Here $h$ has the
simple form $d^2(q_1,q_2)/4+x_1^2+\cdots+x_{n-1}^2+x_n^{m+1}$ with $m$ odd, in some
coordinates around the midpoint of the geodesic (see Lemma
\ref{l:hh}). The corresponding heat kernel asymptotics are described
in the following theorem, which gives a refinement of (part of)
Theorem 27 from \cite{srneel}. This also improves upon the pioneering work of Molchanov, see \cite[Subcase 3.2.a]{molcanov}.

\bt
\label{t-ogrande}
Let $M$ be an $n$-dimensional, complete (sub)-Riemannian manifold, and let $q_1$ and $q_2$ be distinct points such that
all minimizing geodesics from $q_1$ to $q_2$ are strongly normal. Suppose
that, for some $\ell\in \{3,5,7,\ldots\}$ every minimizing geodesic from
$q_1$ to $q_2$ corresponds to a singular point of type $(1,m)$ with
$3\leq m\leq \ell$, and that there is at least one minimizing geodesics
where the singular point is of type $(1,\ell)$. Then there exists
$C>0$ such that
\[
p_t(q_1,q_2) = \frac{C+O\lp
t^{\frac{2}{\ell+1}}\rp}{t^{\frac{n+1}{2}-\frac{1}{\ell+1}}}e^{-d^2(q_1,q_2)/4t}.
\]
\et

\brem
Theorem \ref{t-ogrande} holds for (sub)-Riemannian structures as defined in Section \ref{s:srg}, but also for more general rank-varying sub-Riemannian structures in the sense of \cite[Appendix A]{srneel}.
\erem

\subsection{Complementary results and non-generic cases}

As a further refinement of Theorem 27 from \cite{srneel}, we also
offer the following theorem (which, although not used elsewhere in the
present paper, helps to shed additional light on the relationship
between the structure of the minimizing geodesics and the small-time heat
kernel asymptotics). 

Recall that in the (sub)-Riemannian context, arclength geodesics starting from a point $q$ are parametrized by initial covectors $\lambda\in T^\ast_qM\cap\{H=1/2\}$  where $H$ is the (sub)-Riemannian Hamiltonian $H(q,\lambda)=\sum_1^k \la\lambda,X_i(q)   \ra^2$.  (Here $X_1,\ldots,X_k$ is an orthonormal frame for the structure.) Hence the exponential map $\Exp_q$ is a map that, with  $\lambda\in T^\ast_qM\cap\{H=1/2\}$ and a time $t$, associates   the end point $\Exp_q(t \lambda)$ of the arclength parameterised geodesic defined on $[0,t]$, starting at $q$, and corresponding to $\lambda$. Recall that $\Exp_q$ depends only on the product $t\lambda$ and not on $t$ and $\lambda$ separately. Hence $\Exp_q$ can be thought as defined on the whole of $T^\ast_qM$.

First, suppose that $q_1$ and $q_2$ are distinct
points on $M$, and let $\lambda\in T^\ast_{q_1}M$ be such that
$\Exp_{q_1}(2t\lambda)$
for $0\leq t\leq 1$ gives a minimizing geodesic from $q_1$ to $q_2$. Then
recall from Theorem 24 of \cite{srneel} that $d\Exp_{q_1}$ at $2\lambda$ has
rank $n-r$ if and only if the Hessian of $h_{q_1,q_2}$ at $\Exp_{q_1}(\lambda)$ has rank $n-r$. The idea is
that this rank partially describes how conjugate this minimizing geodesic
is by indicating the number of independent perturbations with respect
to which it is conjugate.

\bt\label{t:zele}
Let $M$ be an $n$-dimensional complete 
(sub)-Riemannian  manifold, and let $q_1$ and $q_2$ be distinct points such that
there is a unique minimizing strongly normal geodesic  from $q_1$ to $q_2$ (which we
denote $\Exp_{q_1}(2t\lambda)$ for $0\leq t\leq 1$, as above). Then if $D_{2\lam}\Exp_{q_1}$ has rank
$n-r$ for some $r\in\{0,1,2,\ldots,n-1\}$, there exist $C_1, C_2, t_0
>0$ such that for $0<t<t_0$
\[
\frac{C_1}{t^{\frac n2+\frac{r}4}}e^{-d^2(q_1,q_2)/4t} \leq p_t(q_1,q_2) \leq
\frac{C_2}{t^{\frac n2+\frac{r}2}}e^{-d^2(q_1,q_2)/4t}.
\]
\et
\brem
As for Theorem \ref{t-ogrande}, Theorem \ref{t:zele} holds for (sub)-Riemannian structures as defined in Section \ref{s:srg}, but also for more general rank-varying sub-Riemannian structures in the sense of \cite[Appendix A]{srneel}.
\erem

The previous two theorems (namely Theorem \ref{t-ogrande} and Theorem
\ref{t:zele}) present an especially appealing picture when
considered together. Simply knowing the rank of the exponential map,
even when there is only a single minimizing geodesic, does not completely
determine the asymptotic behavior. Instead, the precise behavior in
the degenerate directions matters, as Theorem \ref{t-ogrande} indicates.
Indeed, both $A_3$ and $A_5$-conjugacy fall into the $r=1$ case of
Theorem \ref{t:zele}. Further, one could consider the
contribution to the small-time asymptotics of minimizing geodesics
exhibiting other singularities from the Arnol'd classification (Theorem
\ref{t-ogrande} dealing with the case when the only singularities are
from the $A_m$-family), and
determine which ones dominate which other ones. However, the situation
becomes more complicated as more types of singularities are allowed.
Indeed, the virtue of the low-dimensional cases considered here is
that the list of possible singularities which can occur, at least
generically, is sufficiently simple so as to allow a complete analysis.

On the other hand, if one is willing to go beyond the generic case, a
wide variety of expansions are possible, even for Riemannian metrics
in low dimensions. To illustrate this, we give the following theorem,
highlighting the contrast between the generic and non-generic
situations (compare this with Corollary \ref{c:tgv1}).

\bt\label{t:examples}
For any integer $\eta\geq 3$, any positive real $\alpha$, and any real
$\beta$, there exists a smooth metric on the sphere $\mS^2$ and (distinct) points
$q_1$ and $q_2$ such that the heat kernel has the small-time asymptotic
expansion
\[
p_t(q_1,q_2) = e^{-d^{2}(q_1,q_2)/4t}\frac{1}{t^{(3\eta-1)/2\eta}} \lc \alpha +
t^{1/\eta}\beta+o\lp t^{1/\eta}\rp\rc .
\]
\et

Note that both the leading term and the next term in this
expansion are precisely what one gets by applying Theorem
\ref{t-ogrande} to a surface and letting $\ell=2\eta-1$. Thus, the
existence of such expansions is not surprising. Nonetheless, the point
of Theorem \ref{t:examples} is to show that such singularities do in
fact occur (in the proof of Theorem \ref{t:examples}, we will see that
the metric we construct does yield a singularity of type $(1,m)$ for
appropriate $m$), and further, that the ``big-O'' term we see for the
next term in the expansion cannot in general be improved upon. That
is, we do see expansions in fractional powers of $t$, as Theorem
\ref{t-ogrande} seems to suggest.
\section{(Sub)-Riemannian geometry}\label{s:srg}
We start by recalling the definition of a (sub)-Riemannian manifold (including also Riemannian manifolds) in the case  of a distribution of constant rank. 
\bdeff
A \emph{(sub)-Riemannian manifold} is a triple $(M,\distr,\metr)$,
where
\bi
\iii[$(i)$] $M$ is a connected, orientable, smooth manifold of dimension
$n$.
\iii[$(ii)$] $\distr$ is a smooth vector distribution of constant rank $k\leq n$
satisfying the \emph{H\"ormander condition}, that is,  a smooth map that
associates with $q\in M$  a $k$-dimensional subspace $\distr_{q}$ of
$T_qM$ such that
\bqn \label{Hor}
\qquad \text{span}\{[X_1,[\ldots[X_{j-1},X_j]]]_{q}~|~X_i\in\overline{\distr},\,
j\in \N\}=T_qM, \ \all q\in M,
\eqn
where $\overline{\distr}$ denotes the set of \emph{horizontal smooth
vector fields} on $M$, i.e.\
$$\overline{\distr}=\Pg{X\in\mathrm{Vec}(M)\ |\ X(q)\in\distr_{q}~\
\forall~q\in M}.$$
\iii[$(iii)$] $\metr_q$ is a Riemannian metric on $\distr_{q}$ which is smooth
as function of $q$. We denote  the norm of a vector $v\in \distr_{q}$
by 
$|v|_{\metr}=\sqrt{\metr_{q}(v,v)}.$
\ei
\edeff

\brem
Notice that, when $k=n$, we are in the Riemannian context and the H\"ormander condition is automatically satisfied.
As a consequence of the definition, (sub)-Riemannian structures that are not Riemannian exist only in dimension $n\geq3$.
\erem

A Lipschitz continuous curve $\g:[0,T]\to M$ is said to be
\emph{horizontal} (or \emph{admissible}) if
$$\dot\g(t)\in\distr_{\g(t)}\qquad \text{ for a.e.\ } t\in[0,T].$$

Given a horizontal curve $\g:[0,T]\to M$, the {\it length of $\g$} is
\bqn
\label{e-lunghezza}
\ell(\g)=\int_0^T |\dot{\g}(t)|_{\metr}~dt.
\eqn
Notice that $\ell(\g)$ is invariant under time reparametrization of
the curve $\g$.
The {\it distance} induced by the (sub)-Riemannian structure on $M$ is the
function
\bqn
\label{e-dipoi}
d(q_0,q_1)=\inf \{\ell(\g)\mid \g(0)=q_0,\g(T)=q_1, \g\ \mathrm{horizontal}\}.
\eqn
The \hp\ of connectedness of $M$ and the H\"ormander condition
guarantee the finiteness and the continuity of $d(\cdot,\cdot)$ with
respect to the topology of $M$ (Chow-Rashevsky theorem, see for
instance \cite{agrachevbook}). The function $d(\cdot,\cdot)$ is called
the \emph{Carnot-Caratheodory distance} and gives to $M$ the structure
of a metric space (see for instance \cite{nostrolibro}).

Locally, the pair $(\distr,\metr)$ can be given by assigning a set of
$k$ smooth vector fields that span $\distr$ and that are orthonormal
for $\metr$, i.e.\
\bqn
\label{trivializable}
\distr_{q}=\text{span}\{X_1(q),\dots,X_k(q)\}, \qquad
\metr_q(X_i(q),X_j(q))=\delta_{ij}.
\eqn
In this case, the set $\Pg{X_1,\ldots,X_k}$ is called a \emph{local
orthonormal frame} for the sub-Riemannian structure.

A sub-Riemannian manifold of odd dimension is said to be \emph{contact} if 
$\distr= \ker \omega$, where $\omega \in \Lambda^{1}M$ and $d\omega|_{\distr} $ is non degenerate. 
A sub-Riemannian manifold $M$ of even dimension is said to be \emph{quasi-contact} if 
$\distr= \ker \omega$, where $\omega \in \Lambda^{1}M$ and satisfies $\tx{dim }\ker d\omega|_{\distr}=1$.

The sub-Riemannian metric can also be expressed locally in ``control
form'' as follows. We consider the control system,
\bqn\label{eq:lnonce0}
\dot q=\sum_{i=1}^m u_i X_i(q)\,,~~~u_i\in\R\,,
\eqn
and the problem of finding the shortest curve that joins two fixed
points $q_0,~q_1\in M$ is naturally formulated as the optimal control
problem
\bqn \label{eq:lnonce}
~~~\int_0^T  \sqrt{
\sum_{i=1}^m u_i^2(t)}~dt\to\min, \ \ \qquad q(0)=q_0,~~~q(T)=q_1\neq q_{0}.
\eqn

\subsection{Minimizers and geodesics}
 In this section we briefly recall some facts about (sub)-Riemannian
geodesics. In particular, we define the
(sub)-Riemannian exponential map.

 \bdeff A \emph{geodesic} for a (sub)-Riemannian manifold
$(M,\distr,\metr)$ is an admissible curve $\g:[0,T]\to M$ such that
 $|\dot{\g}(t)|_{\metr}$  is constant and, for every sufficiently
small interval $[t_1,t_2]\subset [0,T]$, the restriction
$\g_{|_{[t_1,t_2]}}$ is a minimizer of $\ell(\cdot)$.
A geodesic for which $|\dot{\g}(t)|_{\metr}=1$ is said to be
parametrized by arclength.
A (sub)-Riemannian manifold is said to be \emph{complete}
if $(M,d)$ is complete as a metric space.
\edeff
Notice that, if the (sub)-Riemannian
metric is the restriction to $\distr$ of a complete Riemannian metric,
then it is complete.
Under the assumption that the manifold is complete, a
version of the Hopf-Rinow theorem (see \cite{strichartz} or \cite[Chapter 2]{burago})
implies that the manifold is geodesically complete (i.e. all geodesics are defined for every $t\geq0$) and that for every two points there exists a minimizing geodesic
connecting them.

Trajectories minimizing the distance between two points are solutions
of first-order necessary conditions for optimality, which in the case
of (sub)-Riemannian geometry are given by a weak version of the
Pontryagin Maximum Principle (\cite{pontrybook}).
\bt\label{t:pmpw}
Let $q(\cdot):t\in[0,T]\mapsto q(t)\in M$ be a solution of the
minimization problem \eqref{eq:lnonce0},\eqref{eq:lnonce} such that
$|\dot q(t)|_{\metr}$  is constant and $u(\cdot)$ be the
corresponding control.
Then there exists a Lipschitz map $p(\cdot): t\in [0,T] \mapsto
p(t)\in T^{*}_{q(t)}M\setminus\{0\}$  such that one and only one of
the following conditions holds:
\bi
\iii[(i)]
$
\dot{q}=\dfrac{\partial H}{\partial p}, \quad
\dot{p}=-\dfrac{\partial H}{\partial q}, \quad
u_{i}(t)=\la p(t), X_{i}(q(t))\ra,
\\$
where $H(q,p)=\frac{1}{2} \sum_{i=1}^{k} \la p,X_{i}(q)\ra^{2}$.
\vspace{0.2cm}
\iii[(ii)]
$
\dot{q}=\dfrac{\partial \HH}{\partial p}, \quad
\dot{p}=-\dfrac{\partial \HH}{\partial q}, \quad
0=\la p(t), X_{i}(q(t))\ra,
\\$
where $\HH(t,q,p)=\sum_{i=1}^{k}u_{i}(t) \la p,X_{i}(q)\ra$.
\ei
\et
\noindent
For an elementary proof of Theorem \ref{t:pmpw} see \cite{nostrolibro}.

\brem \label{r:strict} If $(q(\cdot),p(\cdot))$ is a solution of (i) (resp.\ (ii)) then
it is called a \emph{normal extremal} (resp.\ \emph{abnormal
extremal}). It is well known that if $(q(\cdot),p(\cdot))$ is a normal
extremal then $q(\cdot)$ is a geodesic (see
\cite{nostrolibro,agrachevbook}). This does not hold in general for
abnormal extremals. An admissible trajectory $q(\cdot)$ can be at the
same time normal and abnormal (corresponding to different covectors).
If an admissible trajectory $q(\cdot)$ is normal but not abnormal, we
say that it is \emph{strictly normal}. Abnormal extremals do not exist in the Riemannian case.
Abnormal extremals are very difficult to treat and many questions are
still open. For instance, it is not known if abnormal minimizers are
smooth (see \cite{montgomerybook}).
\erem

\bdeff
A minimizer $\gamma:[0,T]\to M$ is said to be \emph{strongly normal}
if for every $[t_1,t_2]\subset [0,T]$, $\gamma|_{[t_1,t_2]}$ is not an
abnormal.\ffoot{COMMENT: should we say that actually we need only half curve that is not abnormal?}\edeff

In the following, we denote by $(q(t),p(t))=e^{t\vec{H}}(q_{0},p_{0})$
the solution of (i) of Theorem \ref{t:pmpw} with initial condition
$(q(0),p(0))=(q_{0},p_{0})$. Moreover we denote by $\pi:T^{*}M\to M$
the canonical projection.

Normal extremals (starting from $q_{0}$) parametrized by arclength
correspond to initial covectors $p_{0}\in \Lambda_{q_{0}}:=\{p_{0}\in
T^{*}_{q_{0}}M | \, H(q_{0},p_{0})=1/2\}.$
The \emph{exponential map} starting from $q_{0}$
is defined as
\bqn \label{eq:expmap}
\EXP_{q_{0}}: \R^{+}  \times\Lambda_{q_{0}}\to M, \qquad
\EXP_{q_{0}}(t p_{0})= \pi(e^{t\vec{H}}(q_{0},p_{0})).
\eqn
Next, we recall the definition of cut and first conjugate times.

\bdeff \label{def:cut} Let $q_{0}\in M$ and
$\g(t)$ an arclength geodesic starting from $q_{0}$.
The \emph{cut time} for $\g$ is $\tcut(\g)=\sup\{t>0:\, \g|_{[0,t]}
\text{ is optimal}\}$. The \emph{cut locus} from $q_{0}$ is the set
$\Cut(q_{0})=\{\g(\tcut(\g)): \g$  is an arclength geodesic from
$q_{0}\}$.
\edeff

\bdeff \label{def:con} Let $q_{0}\in M$ and
$\g(t)$ a normal arclength geodesic starting from $q_{0}$ with initial
covector $p_{0}$. Assume that $\g$ is not abnormal.
The \emph{first conjugate time} of $\g$ is
$\tcon(\g)=\min\{t>0:\  (t,p_{0})$ is a critical point of
$\EXP_{q_{0}}\}$. The \emph{first conjugate locus} from $q_{0}$ is the
set $\Con(q_{0})=\{\g(\tcon(\g)): \g$  is an arclength geodesic from
$q_{0}\}$.

\edeff
It is well known that, for a geodesic $\g$ which is not abnormal, the
cut time $t_{*}=\tcut(\g)$ is either equal to the conjugate time or
there exists another geodesic $\til{\g}$ such that
$\g(t_{*})=\til{\g}(t_{*})$ (see for instance \cite{agrexp}).

\brem
For sub-Riemannian manifolds (that are not Riemannian), the exponential map starting from $q_{0}$ is never a local
diffeomorphism in a neighborhood of the point $q_{0}$ itself. As a consequence the
sub-Riemannian balls are never smooth and both the cut and the
conjugate loci from $q_{0}$ are adjacent to the point $q_{0}$ itself
(see \cite{agratorino}).
\erem

\subsection{The (sub)-Laplacian}\label{s:lapl}
In this section we define the (sub)-Riemannian Laplacian
 on a (sub)-Riemannian manifold $(M, \distr, \metr)$, provided with a
smooth volume $\mu$.
The (sub)-Laplacian is the natural generalization of the
Laplace-Bel\-tra\-mi operator on a Riemannian manifold, defined as
the divergence of the (horizontal) gradient (see for instance \cite{laplacian}).
In a local orthonormal frame $X_{1},\ldots,X_{k}$ it can be written as follows
\bqn \label{eq:lapldiv}
\lapl= \sum_{i=1}^{k}X_{i}^{2}+ (\dive\, X_{i}) X_{i}.
\eqn
Notice that $\lapl$ is always expressed as the sum of squares of the
elements of the orthonormal frame, plus a first-order term that belongs
to the distribution and depends on the choice of the volume $\mu$.

Thanks to the H\"ormander condition, the sub-Laplacian is hypoelliptic \cite{hormander}.  The existence of a smooth heat kernel for the operator $\lapl$, in the case of a  complete sub-Riemannian manifold, is stated in \cite{strichartz}.
 
 \brem
 We recall that in the equiregular case, it is possible to define in an intrinsic way a volume form on the (sub)-Riemannian manifold (called Popp's volume \cite{montgomerybook}, which coincides with the Riemannian volume in the Riemannian case). When  the (sub)-Laplacian is computed with respect to Popp's volume it is called the ``intrinsic-Laplacian''. Other intrinsic non-equivalent volumes can be defined in certain cases, see \cite{corank1}.
 \erem

\section{Technical lemmas}\label{s-technical}
We first recall two technical results that we need. 
The first one is the \virg{splitting lemma} for smooth functions (which can be found in \cite{gromollmeyer}) which allows us to split off non-degenerate directions and thus partially diagonalize $g$.

\bl[\cite{gromollmeyer}] \label{l:splitting}
Let $g:U\subset \R^{n}$ be a smooth 
 function on a neighborhood $U$ of the origin in $\R^n$.  Assume that $g(0)=dg(0)=0$ and that $0$ is an isolated local minimum of $g$ with $\dim \ker d^2g(0)=k$. 
 
 Then there exists a diffeomorphism $\psi$ from a neighborhood of the origin in $\R^{n}$ to a neighborhood of the origin in $\R^{n}$ and a smooth function $\phi:\R^{k}\to \R$ such that
$$g(\psi(u))=u_1^2+\ldots+u_{n-k}^{2}+\phi(u_{n-k+1},\ldots,u_{n}).$$
\el

The second result relates the structure of minimizing geodesics from $q_1$ to $q_2$ to the behaviour of $h_{q_1,q_2}$ near its minimum.
Suppose we have
distinct points $q_1$ and $q_2$ such that every minimizer from $q_1$ to $q_2$
is strongly normal.  Let $\Gamma$ be the set of midpoints
of minimizing geodesics from $q_1$ to $q_2$.

Consider any point $z_0\in \Gamma$, which is the midpoint of some minimizing 
geodesic $\gamma$ from $q_1$ to $q_2$.  Then there is a unique covector
$\lambda\in T^*_{q_1}M$ such that $\Exp_{q_1}(\lambda)={z_0}$,
$\Exp_{q_1}(2\lambda)={q_2}$ and that $\Exp_{q_1}(2 t\lambda)$ for
$t\in[0,1]$ parametrizes $\gamma$.   

Let $\lambda(s)$ be a smooth curve of covectors $\lambda:(-\eps, \eps)
\rightarrow T^{*}_{q_1} M$ (for some small $\eps>0$) such that
$\lambda(0)=\lambda$ and the derivative never vanishes.  Thus
$\lambda(s)$ is a one-parameter family of perturbations of $\lambda$
which realizes the first-order perturbation $\lambda^{\prime}(0) \in
T_{\lambda}\lp T^{*}_{q_1} M\rp$.    Also, we let $z(s)= \Exp_{q_1}
(\lambda(s))$, so that $z(0)=z_0$.  Because $\Exp_{q_1}$ is a
diffeomorphism from a neighborhood of $\lambda$ to a neighborhood of
$z_0$, we see that the derivative of $z(s)$ also never vanishes.  Thus
$z(s)$ is a curve which realizes the vector $z^{\prime}(0)\in
T_{z_0}M$.  Further, we've established an isomorphism of the vector
spaces $T_{\lambda}\lp T^{*}_{q_1}  M\rp$ and $T_{z_0}M$ by mapping
$\lambda^{\prime}(0)$ to $z^{\prime}(0)$, except that we've excluded
the origin by insisting that both vectors are non-zero.

We say that $\gamma$ is conjugate in the direction
$\lambda^{\prime}(0)$ (or with respect to the perturbation
$\lambda^{\prime}(0)$) if $\frac{d}{ds}\Exp_{q_1}(2\lambda(s))|_{s=0} =0$.
Note that this only depends on $\lambda^{\prime}(0)$.  We say that the
Hessian of $h_{{q_1},{q_2}}$ at $z_0$ is degenerate in the direction
$z^{\prime}(0)$ if $\frac{d^2}{ds^2}h_{{q_1},{q_2}}(z(s))|_{s=0} =0$.  This
last equality is equivalent to writing the Hessian of $h_{{q_1},{q_2}}$ as a
matrix in some smooth local coordinates, applying it as a quadratic
form to $z^{\prime}(0)$ expressed in these coordinates, and getting
zero.  This equivalence, as well as the fact that whether the result is
zero or not depends only on $z^{\prime}(0)$, follows from the fact
that $z_0$ is a critical point of $h_{{q_1},{q_2}}$.

The point of the  next theorem is that conjugacy in the direction
$\lambda^{\prime}(0)$ is equivalent to degeneracy in the direction
$z^{\prime}(0)$.

\bt[\cite{srneel}]\label{t:handconj}
Let $M$ be a  complete (sub)-Riemannian
manifold, and let $q_{1}$ and $q_{2}$ be distinct
points such that every minimizing geodesic from $q_1$ to $q_2$ is strongly
normal. Let $\gamma$ be a minimizing geodesic from $q_{1}$ to $q_{2}$, and define $\Gamma$, $z_0\in \Gamma$, $h_{{q_1},{q_2}}$ and the curves $\lam(s), z(s)$ as above.
Then  
\bi
\iii[(i)]  $\gamma$ is conjugate
if and only if the Hessian of
$h_{{q_1},{q_2}}$ at $z_0$ is degenerate. 
\iii[(ii)] In particular $\gamma$ is
conjugate in the direction $\lambda^{\prime}(0)$ if and only if the
Hessian of $h_{{q_1},{q_2}}$ at $z_0$ is degenerate in the corresponding
direction $z^{\prime}(0)$. 
\iii[(iii)] The dimension of the kernel of $D_{2\lam}\Exp_{q_1}$ is equal to the dimension of the kernel of the
Hessian of $h_{{q_1},{q_2}}$ at $z_0$. 
\ei
\et

We now need the following

\bdeff \label{d:sing} Let $f:M\to N$ be a smooth map between two $n$-dimensional manifolds and $q\in M$. We say that $q$ is a \emph{singular point of type $(1,m)$} for $f$ if 
\bi
\iii[(i)] $\tx{rank}(D_{q}f)=n-1$.
\iii[(ii)] in a system of coordinates we have
$$m=\max\{ k\in \N \,|\, f(\gamma(t)) = f(q)+t^{k}v+o(t^{k}), v\neq 0,  \gamma\in \mc{C}_{q}\}$$
where $\mc{C}_{q}$ is the set of smooth curves $\gamma$ such that $\gamma(0)=q$ and $\gamma'(0)\neq0$.
\ei
\edeff
Notice that in this definition one necessarily has $m\geq 2$. The next two lemmas are crucial in what follows.

\bl\label{l:hh}
Let $q_1$ and $q_2$ be conjugate along a minimizing geodesic $\gamma(t)=\EXP_{q_1}(2t\lambda)$ for $t\in[0,1]$, and let $z_{0}$ be the midpoint.  
Then $2\lam$ is a singular point of type $(1,m)$ for $\EXP_{q_1}$ if and only if there exists a system of coordinates $(x_1,\dots,x_n)$ 
around $z_{0}$ such that 
\bqn\label{eq:hhh}
h_{q_1,q_2}(x)= \frac{1}{4}d^2(q_1,q_2)+ x_{1}^{2}+\ldots+x_{n-1}^{2}+x_{n}^{m+1}.
\eqn
\el

\begin{proof} We prove only that if $2\lam$ is a singular point of type $(1,m)$ for $\EXP_{q_1}$ then there exists coordinates such that \eqref{eq:hhh} holds. The converse is similar.

Thanks to Theorem \ref{t:handconj}, we have that $\dim \ker D_{2\lam}\EXP_{{q_1}}=1$ 
if and only $\dim \ker \Hess_{z_{0}} h_{{q_1},{q_2}}=1$. Hence, by Lemma \ref{l:splitting}, 
there exists a coordinate system at $z_{0}=\EXP_{q_1}(\lambda)$ such that 
$h_{{{q_1},{q_2}}}(x_1,\dots,x_n)=h(z_0)+x_1^2+\dots+x_{n-1}^2+\varphi(x_n)$ with $\varphi$ smooth. Since $h(z_0) =d^2(q_1,q_2)/4$, we are left to check that we can choose the coordinate system in such a way that $\varphi(x_n)=x_{n}^{m+1}$.

Consider any smooth curve $s\mapsto\lambda(s)$ in $T_{q_1}^*M$ such that $\lambda(0)=\lambda$, and write
$x(s)=\EXP_{q_1}(\lambda(s))$ and ${y}(s)=\EXP_{q_1}(2\lambda(s))$. It is clear that $x(0)=z_0$ and ${y}(0)={q_2}$.
Using that $z_{0}$, the midpoint of the curve, does not belong to the cut locus of ${q_2}$, one can prove (see the proof of Theorem \ref{t:handconj} in \cite{srneel}) that the families $d_{x(s)}h_{{{q_1},{q_2}}}$ and $y(s)-q_2$ have the same order with respect to $s$ (in any coordinate system). 
In particular, if we choose the family $\lambda(s)$ corresponding to the curve $x(s)=(0,\dots,0,s)$ (this is possible since the exponential map is a diffeomorphism near these points), then
 $d_{(0,\dots,0,s)}h_{{{q_1},{q_2}}}=\varphi'(s)dx_n$. Now assume that $\varphi(x_{n})=c x_{n}^{k+1}+o(x_{n}^{k+1})$ with $c\neq 0$. Up to a
reparameterization of the $x_n$-axis, we can assume that $\varphi(x_{n})=x_{n}^{k+1}$. We want to show that $k=m$. Indeed it is easy to show that 
for every choice of a curve $\lambda(s)$ as before, $d_{x(s)}h_{{{q_1},{q_2}}}$ has an
order less or equal to $k$ which implies that $y(s)-q_2$ has order not greater than $k$. Hence $k$ is maximal and coincides with $m$.

\end{proof}

\bl \label{l:hh2} Let ${q_1}$ and ${q_2}$ be conjugate along a minimizing geodesic $\gamma(t)=\EXP_{{q_1}}(2t\lambda)$ for $t\in[0,1]$. For any $\xi \in \ker D_{2\lam}\Exp_{{q_1}}$ exists a curve 
$s\mapsto \lambda(s)$ with $\lambda(0)=\lambda$, 
$\dot\lambda(0)=\xi$ and such that if $y(s)=\Exp_{q_1}(2\lambda(s))$
then in coordinates $q_2-y(s)=O(s^3)$.
\el

\begin{proof} The proof is similar to that of Lemma \ref{l:hh}. 
Assume that $\dim \ker D_{2\lam}\EXP_{q_1}=k$, and let $z_{0}$ be the midpoint of $\g$.  
Then also $\dim \ker \Hess_{z_{0}} h_{{{q_1},{q_2}}}=k$ and by Lemma \ref{l:splitting} 
%
there exists a system of coordinates 
$(x_1,\dots,x_{n-k},\dots,x_n)$ around $z_{0}$ such that 
$h_{{{q_1},{q_2}}}(x_1,\dots,x_n)=h(z_0)+x_1^2+\dots+x_{n-k}^2+
\varphi(x_{n-k+1},\dots,x_n)$ where $\varphi$ is a smooth function. Moreover 
$\ker \Hess_{z_{0}} h_{{{q_1},{q_2}}}=\text{span}\{\partial_{x_{n-k+1}},\dots,\partial_{x_n}\}$.
 This implies in particular that the Taylor series of $\varphi$ of order 2 is zero. In addition,
since  $z_{0}$ is a minimum of $h_{q_1,q_2}$, 
the Taylor series of $\varphi$ of order 3 is also zero at this point. (See for instance the proof of Lemma \ref{l:prezele}.)
As a consequence, the differential $d\varphi$ has Taylor series of order 2 that vanishes at the origin.

Let now $w=(w_{1},\ldots,w_{n})\in \ker \Hess_{z_{0}} h_{{{q_1},{q_2}}}$.
It satisfies $w_1=\dots=w_{n-k}=0$. Let $z(\cdot)$ be defined by 
$z(s)=ws$ and $\lambda(\cdot)$ be the corresponding curve of covectors in $T_{q_1}^*M$. If we define
 $y(s)=\Exp_{q_1}(2\lambda(s))$, 
then $y(s)-q_2$  has vanishing Taylor series of order 2 (indeed it has the same order as 
$d_{z(s)}h_{{{q_1},{q_2}}}=d\varphi|_{(x_{n-k+1}(s),\dots,x_n(s))}$). Now, since the set
of vectors in the kernel of the Hessian of $h_{{q_1,q_2}}$ is
in bijection with the set of vectors in the kernel of $D_{2\lam}\Exp_{q_1}$,
the lemma is proved. 
\end{proof}


\section{Proof of Theorem \ref{t:tgv0}} \label{s-teorema-minimalita}

The idea of the proof is to identify, among the singularities that appear in the Arnol'd classification, only those that are reached by optimal geodesics. To this end, we will exploit the necessary conditions given by Lemmas \ref{l:hh} and \ref{l:hh2}. 


\bl \label{l:am}
Singularities $A_m$ are of type $(1,m)$, in the sense of Definition \ref{d:sing}.
\el
\begin{proof}
From the normal form of $A_{m}$, that can be rewritten as 
$$\Psi:(x,y_1,...,y_{m-2})\mapsto \left(x^m+\sum_{i=1}^{m-2} x^i y_i,y_1,...,y_{m-2}\right),$$ 
it is clear that $A_{m}$ is a singularity of corank 1, that is, the dimension of the kernel of the differential at zero is 1 and is spanned by $\partial_{x}$.
Moreover, the image of the curve $\gamma(s)=(s,0,...,0)$ under $\Psi$ is $\Psi(\gamma(s))=(s^m,...,0)$. Hence there exists a non-singular curve starting from the origin such that its image has order $m$ in the
variable $s$. Let us show that there exists no curve $\gamma(s)$ such that $\Psi(\gamma(s))$ has order bigger than $m$.
We can assume that $\dot{\gamma}(0)=\partial_{x}$ (otherwise $\dot\gamma(0)$ is not in the kernel of $D_{0}\Psi$ and the image $\Psi(\gamma(t))$ has order 1). It follows that $x(s)\sim s$. Assume now that $\Psi(\gamma(s))$ has order bigger than or equal to $m$. Then $y_{i}(s)$ has order bigger or equal to $m$ for every $i$. Thus the first coordinate of the image of $\Psi(\gamma(s))$, namely
$x(s)^{m}+\sum_i x(s)^{i} y_i(s)$, has order exactly $m$. Hence there is no non-singular curve passing through the origin, the image of which has a degree higher than $m$.
\end{proof}

From  Lemma \ref{l:am} and Lemma \ref{l:hh}, it follows immediately that $A_{m}$ can be optimal if and only if $m$ is odd.
Let us show now that all other singularities of the Arnol'd list can not appear as optimal singularities. Indeed it is enough to show that these singularities do not satisfy the conclusion of Lemma \ref{l:hh2}. We give a proof for the $D_{4}^{+}$ and $E_{6}^{-}$ cases. The other cases can be treated in a similar way.
%
%
%
%
\bi
\iii[$(D_{4}^{+})$]
Let us consider the singularity $D_{4}^{+}$ which has normal form 
$$\Psi:(x,y,z)\mapsto(x^2+y^2+xz,xy,z).$$ 
The kernel of the differential of $\Psi$ at zero is given by $\text{span}\{\partial_{x},\partial_{y}\}$. In particular, let us show that there exists no curve $\gamma(t)$ such that $\dot{\gamma}(0)=\partial_{x}$ and such that $\Psi(\gamma(t))$ has order greater than or equal to 3. Indeed we can assume $\gamma(t)=(at+O(t^{2}),O(t^{2}),O(t^{2}))$ where $a\neq 0$. Thus $\Psi(\gamma(t))=(a^{2}t^{2}+O(t^{3}),O(t^{3}),O(t^{2}))$ which has always order 2.
\iii[$(E_{6}^{-})$] 
The singularity $E_{6}^{-}$ has normal form
$$\Psi: (x,y,z,t,u)\mapsto(x^2+xyz+ty+ux,-y^3+x^2z+tx,z,t,u).$$ 
The kernel of the differential of $\Psi$ at zero is given by $\text{span}\{\partial_{x},\partial_{y}\}$. Let us consider a curve $\gamma(t)$ such that $\dot{\gamma}(0)=\partial_{x}$. Thus we can assume, as before, that $\gamma(t)=(at+O(t^{2}),O(t^{2}),\ldots,O(t^{2}))$, with $a\neq0$. Again, it is easy to see that $\Psi(\gamma(t))=(a^{2}t^{2}+O(t^{3}),O(t^{2}),\ldots,O(t^{2}))$ which is of order 2. 
\ei

\section{Proof of Theorem \ref{t-ogrande} and of Corollaries
\ref{c:tgv1}, \ref{c:sarrusofono1}, \ref{c:sarrusofono2}}

Suppose that $D$ is a compact set of $\bR^n$ having the origin in its
interior and that $f$ is smooth in a neighborhood of $D$. With
$x=(x_1,\ldots,x_n)$, let
\[
g(x) = g(0) + \sum_{i=1}^n x_i^{2m_i},
\]
for some integers $1\leq m_1\leq m_2 \leq \cdots \leq m_n$. Further,
we let $\ell\in\{1,\ldots, n\}$ be the smallest integer such that
$m_{\ell}=m_n$.
For functions $g$ of this form, arbitrarily many terms of the 
asymptotics of the corresponding Laplace integral are known
(as developed in the book of Kanwal and Estrada \cite{KanwalEstrada}, for example).
For our purposes, we will need only the first two terms.
In particular, we have the following asymptotic expansion as $t\searrow 0$,
\begin{equation}\begin{split}\label{Eqn:DiagonalG}
& \int_D f(x) e^{-g(x)/t} \, dx_1\cdots dx_n = \exp\lb -g(0)/t\rb
t^{\frac{1}{2m_1}+\cdots+\frac{1}{2m_n}} \times \\
&\quad  \lc \lb\prod_{i=1}^n \frac{\Gamma\lp 1/2m_i\rp}{m_i}\rb f(0) +
t^{1/m_n} \lb \frac{\Gamma(3/2m_n)}{2} \prod_{i=1}^{n-1}
\frac{\Gamma\lp 1/2m_i\rp}{m_i}\rb
\lb \sum_{\xi=\ell}^n \frac{\partial^2 f}{\partial x_{\xi}^2}(0)\rb
+ o\lp t^{1/m_n} \rp
\rc .
\end{split}\end{equation}
Here $\Gamma(\cdot)$ is the usual Gamma function; recall that $\Gamma(1/2)=\sqrt{\pi}$. This expansion is basic in what
follows.

\begin{proof}[Proof of Theorem \ref{t-ogrande}]
As above, let $\Gamma\subset M$ be the set of all midpoints
of minimizing geodesics from $q_1$ to $q_2$, and let $N(\Gamma)$ be a
sufficiently small neighborhood of $\Gamma$ (as in \cite{srneel},  where we indicate the
conditions it must be small enough to satisfy in a moment). Now by the
assumption that every minimizing geodesic from $q_1$ to $q_2$ corresponds
to a singular point of type $(1,m)$ and Lemma \ref{l:hh}, we know that
$h=h_{q_1,q_2}$ has normal form
 \[
h(x) =
d^2(q_1,q_2)/4+x_1^2+\cdots+x_{n-1}^2+x_n^{m+1}, \quad\text{for some
$m\in\{1,3,5,\ldots\}$},
\]
at each point of $\Gamma$. Hence the points of $\Gamma$ are isolated,
and since $\Gamma$ is compact, there are only finitely many points in
$\Gamma$. We enumerate them as $z_1,\ldots, z_k$.

Now for each $z_i$, let $N_i$ be a neighborhood of $z_i$ that is small
enough for there to exist coordinates on $N_i$ realizing
the normal form of $h$ (at $z_i$), small enough that they are
disjoint,  and small enough so that \cite[Theorem 27]{srneel} applies to
$N(\Gamma)=\cup_{i=1}^k N_i$. Then applying this theorem, we get the small-time asymptotic expansion, for some $\eps>0$,
\begin{equation}\label{eq:neww}
p_t(q_1,q_2) = \sum_{i=1}^k \int_{N_i} \lp\frac{2}{t}\rp^{n} e^{-h(z)/t}\lp
c_0(q_1,z)c_0(z,q_2)+O(t)\rp \, \mu(dz) +O\lp e^{-\frac{d^2(q_1,q_2)+\eps}{4t}}\rp.
\end{equation}
Here the $c_0(q_1,\cdot)$ and $c_0(\cdot,q_2)$ are the leading
coefficient functions in the Ben Arous expansion; for our purposes, it
is enough to note that they are smooth, positive functions on
$N(\Gamma)$. Further, $\mu$ is the volume measure on $M$, and we
recall that the ``$O(t)$'' term in each integral is uniform over
$N_i$. We now treat each term in this sum.

Let $m(z_i)$ be such that the minimizing geodesic through $z_i$
corresponds to a singularity of type $(1,m(z_i))$; we choose this
notation to avoid too much confusion with the notation used in
Equation \eqref{Eqn:DiagonalG}. (Also, it is again clear that we must
have $m(z_i)\in\{1,3,5,\ldots\}$.) Thus we have coordinates
$(x_1,\ldots,x_n)$ on $N_i$ such that
\[
h(x_1,\ldots,x_n) = d^2(q_1,q_2)/4+x_1^2+\cdots+x_{n-1}^2+x_n^{m(z_i)+1} .
\]
Also, because $\mu$ is a smooth measure, we have that
$\mu(dz)=F_i(x)dx_{1}\cdots dx_{n}$ for some smooth, positive function
$F_i$ (on $N_i$). Writing the integral over $N_i$ in these coordinates
and applying Equation \eqref{Eqn:DiagonalG}
(here $m_1=\cdots=m_{n-1}=1$ and $m_n=(m(z_i)+1)/2$), we have (keeping
only the leading term of the expansion)
\[\begin{split}
& \int_{N_i} \lp\frac{2}{t}\rp^{n} e^{-h(z)/t}\lp
c_0(q_1,z)c_0(z,q_2)+O(t)\rp \, \mu(dz) =  \lp\frac{2}{t}\rp^{n}
\exp\lb\frac{-d^2(q_1,q_2)}{4t}\rb \times \\
& \quad t^{\frac{n-1}{2}+\frac{1}{m(z_i)+1}} \lc \lb F_i(z_i)\lp
c_0(q_1,z_i)c_0(z_i,q_2)+O(t) \rp\rb  \Gamma\lp
\frac{1}{2}\rp^{n-1}\frac{\Gamma\lp\frac{1}{m(z_i)+1}\rp}{\frac{m(z_i)+1}{2}}
+O\lp t^{\frac{2}{m(z_i)+1}} \rp\rc.
\end{split}\]
Here we have used the uniformity of the ``$O(t)$'' to control its
integral. 
Of course, we easily simplify the above to find
\begin{equation}\label{Eqn:A1Exp}\begin{split}
& \int_{N_i} \lp\frac{2}{t}\rp^{n} e^{-h(z)/t}\lp
c_0(x,z)c_0(z,y)+O(t)\rp \, \mu(dz) = \\
&\quad \exp\lb\frac{-d^2(q_1,q_2)}{4t}\rb
\frac{1}{t^{\frac{n+1}{2}-\frac{1}{m(z_i)+1}}}
\lb C_i + O\lp t^{\frac{2}{m(z_i)+1}} \rp\rb \\
&\text{where}\quad
C_i = F_i(z_i)c_0(q_1,z_i)c_0(z_i,q_2)\frac{4}{m(z_i)+1}\lp
4\pi\rp^{\frac{n-1}{2}}
\Gamma\lp\frac{1}{m(z_i)+1}\rp .
\end{split}\end{equation}
Here the positivity of $C_i$ follows from the positivity of $F$ and
the $c_0$ on $N_i$ and the positivity of the $\Gamma$-function on the
positive reals.

Returning to Equation \eqref{eq:neww}, we see that $p_t(q_{1},q_{2})$ has
asymptotic expansion given by a  sum of $k$ terms of the form given in
Equation \eqref{Eqn:A1Exp}, for various values of $m(z_i)$. It is
clear that the resulting sum is asymptotically dominated by those
terms with the largest value of $m(z_i)$. Indeed, in the notation of
the theorem, $\ell$ is exactly this largest value. Thus, we find that
\[
p_t(q_1,q_2) = \frac{C+O\lp
t^{\frac{2}{\ell+1}}\rp}{t^{\frac{n+1}{2}-\frac{1}{\ell+1}}}e^{-d^2(q_1,q_2)/4t},
\]
where $C>0$ is the sum of the $C_i$ corresponding to geodesics (where
the geodesics are indexed by the $z_i$) which give
singularities of type $(1,\ell)$ (and by assumption, there is at least
one such geodesic). 
\end{proof}

\begin{proof}[Proof of Corollary \ref{c:tgv1}]
By Theorem \ref{t:tgv0}, every minimizing geodesic from $q_1$ to $q_2$ is
either non-conjugate, which corresponds to a non-singular point, $A_3$-conjugate, which corresponds to a
singularity of type $(1,3)$ by Lemma \ref{l:am}, or $A_5$-conjugate, which
corresponds to a singularity of type $(1,5)$, again by Lemma
\ref{l:am}.  Now case (i) of the corollary follows by \cite[Corollary 3]{srneel}, case (ii) follows by applying Theorem
\ref{t-ogrande} with $\ell=3$, and case (iii) follows by applying
Theorem \ref{t-ogrande} with $\ell=5$. 
\end{proof}

\begin{proof}[Proof of Corollary \ref{c:sarrusofono1}]
By the results on the cut locus of a generic 3D contact sub-Riemannian structure 
near the diagonal (see Section \ref{s:srcase}), for every $q_2$
sufficiently close to $q_1$, every minimizing geodesic from $q_1$ to
$q_2$ is either non-conjugate (corresponding to a non-singular point) or $A_3$-conjugate (corresponding to a
singularity of type $(1,3)$ by Lemma \ref{l:am}).  
Thus case (i) of the corollary follows
by \cite[Corollary 3]{srneel}. Also,
case (ii) of the corollary follows by applying Theorem \ref{t-ogrande}
with $n=3$ and $\ell=3$ and from the earlier observation that
$A_3$-conjugate geodesics arise arbitrarily close to $q_1$. 
\end{proof}

\begin{proof}[Proof of Corollary \ref{c:sarrusofono2}]
The proof is almost identical to that of Corollary
\ref{c:sarrusofono1}. In particular, we again already know that, for
$q_1$ as in the corollary and $q_2$ sufficiently close to $q_1$, every
minimizing geodesic from $q_1$ to $q_2$ is either non-singular or $A_3$-conjugate (and of type $(1,3)$), and that there
are points arbitrarily close to $q_1$ for which the second possibility
occurs. Thus case (i) of the corollary
follows  by applying \cite[Corollary 3]{srneel},
and case (ii) of the corollary follows by applying Theorem
\ref{t-ogrande} with $n=4$ and $\ell=3$. \end{proof}


\section{Proof of the complementary results}

\subsection{Proof of Theorem \ref{t:zele}}

\begin{lemma}\label{l:prezele}
Let $f$ be a smooth function defined on a neighborhood of the origin in
$\bR^n$. Suppose that $f$ has an isolated local minimum at the origin,
and that the Hessian of $f$ at the origin has rank $n-r$ for some
$r\in \{0,1,\ldots,n-1\}$. Then there exist a system of coordinates
$x=(x_1,\ldots,x_n)$ around the origin such that, in some neighborhood
of the origin,
\[
x_1^2+\cdots +x_{n-r}^2  \leq f(x)-f(0) \leq x_1^2+\cdots +x_{n-r}^2
+x_{n-r+1}^4 +\cdots + x_n^4 .
\]
\end{lemma}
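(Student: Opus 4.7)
My plan is to combine the splitting lemma (Lemma \ref{l:splitting}) with a short Taylor argument on the degenerate subspace, and then a rescaling of the degenerate coordinates to absorb multiplicative constants.

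First I would apply Lemma \ref{l:splitting} to $g = f - f(0)$ with $k = r$ to produce coordinates $(u_1,\ldots,u_n)$ near the origin in which
\[
f(u) - f(0) = u_1^2 + \cdots + u_{n-r}^2 + \phi(u_{n-r+1},\ldots,u_n),
\]
where $\phi$ is smooth with $\phi(0) = 0$. (The positivity of the signs on the non-degenerate quadratic part is automatic because the origin is a minimum.) Restricting to $u_1=\cdots=u_{n-r}=0$ shows that the origin is also an isolated local minimum of $\phi$, and by construction $\Hess_0 \phi = 0$. The lower bound in the lemma is then immediate: $\phi \geq 0$ in a neighborhood of the origin.

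For the upper bound, the key observation is that the cubic Taylor polynomial of $\phi$ at the origin must also vanish. Indeed, if $T_3$ denotes this cubic form, then along any ray $u' = tv$ with $v \in \bR^r$ we have $\phi(tv) = t^3 T_3(v) + O(t^4)$, and for this to be non-negative for small $t$ of both signs we need $T_3(v) = 0$ for every $v$. Consequently $\phi(u') = O(|u'|^4)$, so there exist a constant $C > 0$ and a neighborhood of $0 \in \bR^r$ on which $\phi(u') \leq C |u'|^4$. By Cauchy--Schwarz,
\[
|u'|^4 = (u_{n-r+1}^2 + \cdots + u_n^2)^2 \leq r (u_{n-r+1}^4 + \cdots + u_n^4),
\]
so $\phi(u') \leq C r (u_{n-r+1}^4 + \cdots + u_n^4)$ near the origin.

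To absorb the constant $Cr$, I would finally rescale the degenerate directions by setting $x_i = u_i$ for $1 \leq i \leq n-r$ and $x_i = (Cr)^{1/4} u_i$ for $n-r+1 \leq i \leq n$. The non-degenerate quadratic block is preserved, and the quartic bound becomes $\phi \leq x_{n-r+1}^4 + \cdots + x_n^4$ in a possibly smaller neighborhood of the origin, which combined with $\phi \geq 0$ yields the two-sided estimate claimed. The only substantive step in the argument is the vanishing of the cubic Taylor polynomial of $\phi$, which is where the minimum hypothesis is genuinely used beyond what the splitting lemma already provides; everything else is elementary bookkeeping.
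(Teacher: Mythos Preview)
Your proposal is correct and follows essentially the same approach as the paper: apply the splitting lemma, argue that the cubic Taylor polynomial of $\phi$ vanishes because the origin is a minimum, deduce a quartic upper bound $\phi \leq C\sum u_i^4$, and rescale the degenerate coordinates to absorb the constant. The only cosmetic difference is that the paper writes out the Taylor decomposition $\phi = P_3 + P_4 + R_5$ and bounds $P_4$ and $R_5$ separately, whereas you pass directly through $\phi = O(|u'|^4)$ and use Cauchy--Schwarz; the substance is identical.
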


\begin{proof}
It's clear that $f-f(0)$ satisfies the assumptions of Lemma
\ref{l:splitting}, and thus there are coordinates $(u_1,\ldots,u_n)$
near the origin and a smooth function $\phi:\bR^k\rightarrow \bR$ such
that, in some neighborhood of the origin
\begin{equation}\label{Eqn:fsplit}
f(u_1,\ldots,u_n)-f(0) = u_1^2+\cdots +u_{n-r}^2 +\phi(u_{n-r+1},\ldots,u_n) .
\end{equation}
Further, $\phi$ must have an isolated local minimum of 0 in some
neighborhood of the origin of $\bR^k$.

Next, we wish to show that, for some $C>0$,
$\phi(u_{n-r+1},\ldots,u_n)<C(u_{n-r+1}^4 +\cdots + u_n^4)$ (assuming
$r\geq1$, otherwise we consider this to be vacuously true). To
simplify notation, let $y_1= u_{n-r+1}, \ldots, y_r= u_n$. We know
that $\phi$ is smooth, has a local minimum of 0 at the origin, and its
Hessian at the origin vanishes. Thus, if we write $\phi$ as a
third-order Taylor series (with remainder) around the origin, the
linear and quadratic terms all vanish. If we let $y=(y_1,\ldots,y_r)$, we have
$\phi(y)=P_3(y)+P_4(y)+R_5(y)$ where $P_3$ and $P_4$ are homogeneous polynomials of degree 3 and 4, respectively, and $R_5$ is a smooth function such that $R_5(y)=o(\|y\|^4)$.

For any monomial of degree four $y_iy_jy_ky_l$, it is clear that $y_iy_jy_ky_l\leq \sum_{i=1}^r y_i^4$. Hence there exists a constant $C_4>0$ such that $|P_4(y)|\leq C_4\sum_i y_i^4$. Moreover, since $\|y\|^4=(\sum_i y_i^2)^2$ it is clear that there exists a constant $C_5>0$ such that $\|y\|^4\leq C_5\sum_i y_i^4$ and hence $R_5(y)\leq C_5\sum_i y_i^4$ in a neighborhood of the origin. To complete the argument, we wish to prove that $P_3=0$. Assume by contradiction that there exists $y$ such that $P_3(y)\neq 0$. Hence for $t\in\R$,
$\phi(ty)=t^3P_3(y)+o(t^3)$. If $P_3(y)>0$ (resp.\ $<0$) then, for $t$
small enough and negative (resp.\ positive), $\varphi(ty)<0$ which contradicts  the hypotheses.

We conclude that $\phi(y)=P_4(y)+R_5(y)$, and the previous considerations imply that there exists $C$ such that, in a neighborhood of the origin, $0\leq \varphi(y)\leq C \sum_{i=1}^r y_i^4$, as desired. Thus, near the origin,
\[
u_1^2+\cdots +u_{n-r}^2  \leq f(u)-f(0) \leq u_1^2+\cdots +u_{n-r}^2 +
C\lp u_{n-r+1}^4 +\cdots + u_n^4 \rp.
\]
Letting each $x_i$ be an appropriate rescaling of $u_i$ then gives a system of
coordinates as claimed in the lemma. 
\end{proof}

\begin{proof}[Proof of Theorem \ref{t:zele}]
Because there is a unique minimizing geodesic from $q_1$ to $q_2$, we
know that $h=h_{q_1,q_2}$ has a unique minimum at the midpoint of this
geodesic. Further, from Theorem \ref{t:handconj}, we see that the
Hessian of $h$ at this midpoint has rank $n-r$. If, as usual, we let
$N(\Gamma)$ be a (sufficiently small) neighborhood of this midpoint,
then Lemma \ref{l:prezele} applies to $h-\frac{1}{4}d^2(q_1,q_2)$, and
we conclude that there are coordinates on $N(\Gamma)$ such that
\[
x_1^2+\cdots +x_{n-r}^2  \leq h(x)-\frac{1}{4}d^2(q_1,q_2) \leq
x_1^2+\cdots +x_{n-r}^2 +x_{n-r+1}^4 +\cdots + x_n^4 .
\]

Now we can apply Theorem 27 of \cite{srneel} to get the small-time
asymptotic expansion
\[
p_t(q_1,q_2) = \int_{N(\Gamma)} \lp\frac{2}{t}\rp^{n} e^{-h(z)/t}\lp
c_0(q_1,z)c_0(z,q_2)+O(t)\rp \, \mu(dz) .
\]
Here the $c_0(q_1,\cdot)$ and $c_0(\cdot,q_2)$ are smooth, positive
functions on $N(\Gamma)$, $\mu$ is the volume measure on $M$, and the
``$O(t)$'' term is uniform. Estimating $h$ from both sides in the
$x$-coordinates gives
\[\begin{split}
p_t(q_1,q_2) &\leq
\int_{N(\Gamma)} \lp\frac{2}{t}\rp^{n} \exp\lb-\frac{h(0)+x_1^2+\cdots
+x_{n-r}^2}{t} \rb \lp
c_0(q_1,x)c_0(x,q_2)+O(t)\rp \, \mu(dx) \\
\text{and}\quad
p_t(q_1,q_2) &\geq
\int_{N(\Gamma)} \lp\frac{2}{t}\rp^{n} \exp\lb-\frac{h(0)+x_1^2+\cdots
+x_{n-r}^2+x_{n-r+1}^4 +\cdots + x_n^4}{t} \rb \\
&\qquad \times \lp
c_0(q_1,x)c_0(x,q_2)+O(t)\rp \, \mu(dx) .
\end{split}\]
Let $F(x)$ be the density of $\mu$ with respect to $dx_1\cdots dx_n$;
then $F$ is smooth and positive.

The idea is to apply Equation \eqref{Eqn:DiagonalG} to the right-hand
side of each of the above in order to determine the leading term in
the asymptotic expansions. Starting with the first equation, note that
we are free to make $N(\Gamma)$ smaller without affecting the
asymptotics. Thus, assume that it is of the form $x_i\in (-\eps,\eps)$
for all $i$. Then we can use Fubini's theorem to write
\[\begin{split}
& \int_{N(\Gamma)} \lp\frac{2}{t}\rp^{n}
\exp\lb-\frac{h(0)+x_1^2+\cdots +x_{n-r}^2}{t} \rb \lp
c_0(q_1,x)c_0(x,q_2)+O(t)\rp \, \mu(dx) \\
&= \lp\frac{2}{t}\rp^{n}e^{-d^2(q_1,q_2)/4t}
\int_{(-\eps,\eps)^{n-r}} \exp\lb-\frac{x_1^2+\cdots +x_{n-r}^2}{t}
\rb \times \\
& \quad \lc \int_{(-\eps,\eps)^{m}} \lp
c_0(q_1,x)c_0(x,q_2)+O(t)\rp F(x) \, dx_{n-r+1}\cdots dx_n\rc \,
dx_1\cdots dx_{n-r}
\end{split}\]
We see that the inner integral is just some smooth, positive function,
call it $g(x_1,\ldots,x_{n-r})$, plus a uniform $O(t)$. Then applying
Equation \eqref{Eqn:DiagonalG} to the outer Laplace integral, we find
\[\begin{split}
p_t(q_1,q_2) &\leq  \lp\frac{2}{t}\rp^{n}e^{-d^2(q_1,q_2)/4t}
t^{\frac{n-r}{2}}\Gamma\lp \frac{1}{2}\rp^{n-r}\lb g(0)+O(t)\rb \\
&= \frac{C+O(t)}{t^{\frac{n}{2}+\frac{r}{2}}} e^{-d^2(q_1,q_2)/4t}  ,
\end{split}\]
for some $C>0$.

The second equation is a bit simpler, since we don't need to split the
integral. We have
\[\begin{split}
&p_t(q_1,q_2)\geq \int_{N(\Gamma)} \lp\frac{2}{t}\rp^{n}
\exp\lb-\frac{h(0)+x_1^2+\cdots +x_{n-r}^2+x_{n-r+1}^4 +\cdots +
x_n^4}{t} \rb \\
&\qquad\qquad\qquad\qquad \times \lp
c_0(q_1,x)c_0(x,q_2)+O(t)\rp \, \mu(dx) \\
& \quad=  \lp\frac{2}{t}\rp^{n}  e^{-d^2(q_1,q_2)/4t}
t^{\frac{n-r}{2}+\frac{r}{4}}\lb \Gamma\lp \frac{1}{2}\rp^{n-r}
\frac{\Gamma\lp \frac{1}{4}\rp^{r}}{2^r}  c_0(q_1,0)c_0(0,q_2)F(0)
+O\lp t^{1/2}\rp \rb \\
&\quad = \frac{\tilde{C}+O\lp t^{1/2}\rp}{t^{\frac{n}{2}+\frac{r}{4}}}
e^{-d^2(q_1,q_2)/4t} ,
\end{split}\]
for some $\tilde{C}>0$.

Combining these two estimates on $p_t(q_1,q_2)$ and using the
definition of ``big-$O$'' shows that we can find constant
$C_1,C_2,t_0>0$ such that the conclusion of the theorem holds. 
\end{proof}

\subsection{Proof of Theorem \ref{t:examples}}

The proof consists of the construction of the family of metrics the existence of which is claimed in the theorem. The essential idea is that the local structure of $h_{q_{1},q_{2}}$ is determined by the order of contact of the spheres of radius $d(q_{1},q_{2})/2$ centered at $q_{1}$ and $q_{2}$. (These spheres are tangent exactly at points in $\Gamma$.) 

 In $\bR^2$, consider a neighborhood $U$ of the segment
of the $y$-axis from $(0,-1/2)$ to $(0,1/2)$ (we will eventually take
these points to be $q_1$ and $q_2$). We wish to put a metric on this neighborhood such that
the resulting heat kernel for these two points has the desired
small-time expansion. The proof is somewhat involved, so we divide it into four steps.

\emph{Step 1}: In this step, our task is to indicate the local
structure of the geodesics on this neighborhood.

Let $\xi$ be the piece of the circle of radius $1/2$ around $(0,1/2)$
contained in $U$ and passing through $(0,0)$. That is, $\xi$ is just
the graph of
\[
y(x)= \frac{1}{2}-\sqrt{\frac{1}{4}-x^2} = x^2+x^4+2x^6+5x^8 +\cdots
\]
in some neighborhood of the origin (note that every term $x^{2k}$
appears with non-zero coefficient). Next, for any
integer $\eta$, with $\eta\geq 3$, let $\gamma_{\eta}$ be the curve
given by the graph, near the origin, of the first $\eta-1$ terms in
this power series; for example,
\[\begin{split}
\gamma_3 = \lc (x,y): y= x^2+x^4 \text{ for $x\in(-\eps,\eps)$} \rc 
\end{split}\]
The important point is that $\xi$ and $\gamma_{\eta}$ are tangent to
order $2\eta-1$ but not to order $2\eta$ at the origin.

Next, we describe the desired geodesics starting from $(0,-1/2)$. Let
$f_{\theta}(s)$, for
$\theta\in [0,2\pi)$ and $s\in [0,1]$ be a family of curves (indexed
by $\theta$) with the following properties. We have that
$f_{\theta}(0)=(0,-1/2)$ for all
$\theta$, and in a neighborhood of $(0,-1/2)$, the curves (for small
$s$) are the usual radial lines coming from $(0,-1/2)$, parametrized
by $\theta$ as
usual, except that for convenience we let $\theta=0$ correspond to the
ray upward along the $y$-axis and we let $\theta$ increase in the
clockwise
direction. Further, we let $f_0(s)$ (for $s\in[0,1]$, of course) be
the segment of the $y$-axis from $(0,-1/2)$ to $(0,0)$. To extend the
rest of the curves,
we first insist that $f$, as a function of $\theta$ and $s$, gives a
smooth diffeomorphism onto the portion of $U$ below (and including)
$\gamma_{\eta}$,
except of course for $s=0$. For simplicity, we assume that $f$ is
symmetric with respect to reflection across the $y$-axis, that is,
taking $\theta$ to
$-\theta$ gives a curve which is the reflection of $f_{\theta}$ across
the $y$-axis. Next, for all $\theta$ near $0$, the curve $f_{\theta}$
should intersect
$\gamma_{\eta}$ perpendicularly (the point of intersection occurring
when $s=1$), and further, for $s$ near 1, $f_{\theta}(s)$ should be
straight (with respect
to the Euclidean metric, so that near $\gamma_{\eta}$, $f_{\theta}$
corresponds with the normal lines to $\gamma_{\eta}$).

For future use, let $\sigma$ be the arclength parameter on
$\gamma_{\eta}$, normalized so that $\gamma_{\eta}(0)=(0,0)$ and the
$x$-coordinate increases as
$\sigma$ increases. Since $f_{\theta}(1)$ for $\theta$ near 0 also
provides a parametrization of $\gamma_{\eta}$, we let $\sigma(\theta)$
denote the induced
change of coordinates. Our normalizations are such that $\sigma(0)=0$.
Further, by the assumptions that $f$ gives a diffeomorphism and is
symmetric (as
discussed above), we see that $\sigma^{\prime}(0)>0$ and
$\sigma^{\prime\prime}(0)=0$. Finally, note that we can prescribe any
positive value for
$\sigma^{\prime}(0)$ and any real value for
$\sigma^{\prime\prime\prime}(0)$, in the sense that we can find a
family of curves $f$ that realize these
values.

\emph{Step 2}: As mentioned, the curves $f_{\theta}(s)$ are intended
to be the geodesic rays starting from $(0,-1/2)$, relative to some
metric on $U$. In this step, we describe this metric.

In particular, we take a smooth metric on $U$ with the following properties. The
geodesic rays from $(0,-1/2)$ coincide with the curves $f_{\theta}$
until these curves
hit either the boundary of $U$ or $\gamma_{\eta}$. Near
$\gamma_{\eta}$, these geodesic rays are straight lines (hence
Euclidean geodesics as well). The
metric is Euclidean above and in a neighborhood of $\gamma_{\eta}$, and thus these geodesics continue as
straight lines, normal to $\gamma_{\eta}$, on the part of $U$ above
$\gamma_{\eta}$. If $r$ is the distance function from
$(0,-1/2)$, $(r,\theta)$ for $\theta$ as above gives
polar coordinates around
$(0,-1/2)$ (with the above convention for $\theta$), and $\gamma_{\eta}$ coincides, near
$(0,0)$, with the locus of
points that are distance $1/2$ from $(0,-1/2)$. Finally, this metric on $U$ is symmetric with respect to reflection about
the $y$-axis. (That such a metric exists is a straightforward exercise, and we now assume that $U$ is equipped with some such metric.)

Also,
$\xi$ coincides, near $(0,0)$, with the locus of points that are
distance $1/2$ from
$(0,1/2)$. Since $\xi$ and $\gamma_{\eta}$ meet only at $(0,0)$, we see that the
unique minimizing geodesic from $(0,-1/2)$ to $(0,1/2)$ is the
intervening segment of the
$y$-axis, and these points are distance 1 from each other.

Finally, we include $U$ isometrically in the smooth 2-sphere (with some smooth metric), in such  a way that  the only minimizing geodesic from $(0,-1/2)$ to $(0,1/2)$ remains this
segment of the $y$-axis. Having done this, it is clear that the small-time
asymptotics of the heat kernel relative to $(0,-1/2)$ and $(0,1/2)$
depend only on the metric in a neighborhood of this segment of the $y$-axis, so we can again restrict
our attention to $U$. We will take $q_1$ to be $(0,-1/2)$ and $q_2$ to
be $(0,1/2)$.

\emph{Step 3}:
Having constructed our metric (or more accurately, family of metrics),
the next step is to understand $h=h_{q_1,q_2}$ near $(0,0)$. We know
that $h$ has a
unique minimum at $(0,0)$, where it equals $1/4$. We know that the
Hessian of $h$, restricted to the direction along the $y$-axis, is
non-degenerate, so
Lemma \ref{l:splitting} implies that there are coordinates $(u_1,u_2)$
around $(0,0)$ such that $h=\frac{1}{4}+u_1^2+\phi(u_2)$, for some
smooth function
$\phi$. Next, note that, for $(x,y)$ near $(0,0)$, the distance from
$(0,1/2)$ to $(x,y)$ can be determined solely from the distance to
$\xi$, in the
Euclidean metric (and whether $(x,y)$ is above or below $\xi$), and
similarly for $(0,-1/2)$ and $\gamma_{\eta}$. In particular, near
$(0,0)$, $h(x,y)$ is
completely determined by $\xi$ and $\gamma_{\eta}$ (and the fact that
our metric is Euclidean near $(0,0)$). Thus, $h$ depends on ${\eta}$,
but not on
$\sigma(\theta)$. Using this relatively simple structure for $h$ and
the fact that $\xi$ and $\gamma_{\eta}$ are tangent to order $2\eta-1$
but not to order $2\eta$
at the origin, we see that there is some constant $C>1$ such that, if
$(x,y)$ is near $(0,0)$ and between $\xi$ and $\gamma_{\eta}$
\[
\frac{1}{C}x^{2\eta} < h(x,y)-\frac{1}{4} < C x^{2\eta} .
\]
It follows that, after possibly re-parametrizing $u_2$, we
have\footnote{In fact, it's not hard to show that the curve
$u_1=0$ is exactly the curve of equidistant points, which is squeezed
between $\xi$ and $\gamma_{\eta}$, and thus give a more concrete
description of the
$(u_1,u_2)$ coordinates. But this isn't necessary and would only make
this long construction even longer.}
$h=\frac{1}{4}+u_1^2+u_2^{2\eta}$.

\emph{Step 4}: In this step, we determine the asymptotic expansion,
and conclude the proof.

We have just seen that the unique minimizing geodesic from $q_1=(0,-1/2)$
to $q_2=(0,1/2)$ has a singularity of type $(1,2\eta-1)$. Thus,
Theorem \ref{t-ogrande} applies, but we want a more detailed
description of the asymptotic expansion. We apply Equation
\eqref{Eqn:DiagonalG}, as in the proof of Theorem \ref{t-ogrande}, except
that now we keep track of the coefficients of the first two terms of
the expansion. From this, we have
\begin{equation}\label{Eqn:ExampleExpansion}\begin{split}
p_t (q_1,q_2)&= e^{-1/4t}\frac{1}{t^{(3\eta-1)/2\eta}} \lc
KF(0,0)c_0((0,-1/2),(0,0))c_0((0,0),(0,1/2)) \right. \\
& \quad \left. + t^{1/\eta}\tilde{K}
\frac{\partial^2}{\partial u_2^2}\lb
F(\cdot)c_0((0,-1/2),\cdot)c_0(\cdot,(0,1/2)) \rb_{(0,0)}+o\lp
t^{1/\eta}\rp\rc ,
\end{split}\end{equation}
where $K$ and $\tilde{K}$ are some positive constants depending only
on $\eta$. Here we recall that $F(u_1,u_2)$ is the density of Lebesgue
measure $dxdy$
with respect to $du_1du_2$, and that the $c_0$ are the coefficients of
the leading term in the Minakshishundaram-Pleijel expansion. All three
are smooth,
positive functions. As our discussion of the coordinates $(u_1,u_2)$
makes clear, $F$ depends only on $\eta$ (via $\gamma_{\eta}$), and not
on $\sigma(\theta)$.
This property is sufficient for us here, so we make no effort to
compute it more explicitly. Next, $c_0$ is given by the  reciprocal
of the square-root of
the Jacobian of the exponential map (see, for example, Section 5.1 of
Hsu's book \cite{EltonBook} for further discussion). Since the
metric is just the Euclidean metric near the minimizing geodesic from
$(0,1/2)$ to $(0,0)$ (recall that this geodesic is just a segment of
the $y$-axis), we see
that $c_0((x,y),(0,1/2)) = 1$ for all $(x,y)$ near $(0,0)$.

Finally, we need to understand $c_0((0,-1/2),(x,y))$ near the origin.
In polar coordinates around $(0,-1/2)$, the (Euclidean) volume form on
the tangent
space is $r dr d\theta$, which is just $1/2 \, dr d\theta$ when
$r=1/2$, while the volume form induced by the metric on $U$ (which is
Euclidean for
different coordinates) is, again when $r=1/2$, given by
$\sigma^{\prime}(\theta)drd\theta$. Thus, if $(x,y)$ is given in polar
coordinates by $(1/2,\theta)$
for $\theta$ near 0, we have
$c_0((0,-1/2),(x,y)) = \sqrt{1/2\sigma^{\prime}(\theta)}$.
It follows that the leading coefficient in the expansion
\eqref{Eqn:ExampleExpansion} is
\[
K \cdot F(0,0)/\sqrt{2\sigma^{\prime}(0)} .
\]
Since $K F(0,0)$ is some positive constant depending only on $\eta$
(given our general construction of the metric), we can make this be
any positive real by
prescribing the appropriate (positive) value for $\sigma^{\prime}(0)$,
and we do this so as to get the $\alpha$ we want in the expansion.
Here, of course, we recall our earlier observation that we are able to
choose
$\sigma^{\prime}(0)$ in precisely this way.

We now consider the second term in the expansion. It's clear from our
discussion of the $(u_1,u_2)$ coordinates that the curve $u_1=0$ is
squeezed
between $\xi$ and $\gamma_{\eta}$. Hence $\partial u_2$ at $(0,0)$ is
parallel to the $x$-axis. So by symmetry, the first derivatives of $F$
and the $c_0$'s all
vanish at the origin. 

Then using all of this plus the fact that $c_0((x,y),(0,1/2))$ is
locally constant, we see that
\[
\frac{\partial^2}{\partial u_2^2}\lb
F(\cdot)c_0((0,-1/2),\cdot)c_0(\cdot,(0,1/2)) \rb_{(0,0)} =  \sqrt{\frac{1}{2\sigma^{\prime}(0)}}
\frac{\partial^2}{\partial u_2^2} F(0,0)
+ F(0,0) \frac{\partial^2}{\partial u_2^2}
\sqrt{\frac{1}{2\sigma^{\prime}(0)}} .
\]
Because the curve $u_1=0$ is squeezed between $\xi$ and
$\gamma_{\eta}$ and $\xi$ and $\gamma_{\eta}$ are tangent to at least
order 5, we see that $u_2$ is a
re-parametrization of $\gamma_{\eta}$ up to more than order 2 (at the
origin), and thus it makes sense to compute
\[
\frac{\partial^2}{\partial u_2^2} \sigma^{\prime}(0) =
\sigma^{\prime\prime\prime}(0)  \lp\frac{\partial\theta}{\partial u_2}
(0)\rp^2 +
\sigma^{\prime\prime}(0) \frac{\partial^2\theta}{\partial u_2^2}(0)
= \sigma^{\prime\prime\prime}(0)  \lp\frac{\partial\theta}{\partial
u_2}(0)\rp^2 ,
\]
where we've used that $\sigma^{\prime\prime}(0)$ vanishes by symmetry.
Since $u_2(\sigma)=c \sigma +O(\sigma^2)$ for some $c>0$ depending
only on
$\eta$, we are able to see that
\[
\frac{\partial^2}{\partial u_2^2}  \sqrt{\frac{1}{2\sigma^{\prime}(0)}}
= -\frac{1}{2\sqrt{2}}
\frac{1}{\sigma^{\prime}(0)^{3/2}}\sigma^{\prime\prime\prime}(0)
\lp\frac{1}{c\sigma^{\prime}(0)}\rp^2 .
\]
It follows that the coefficient of the second term in the expansion \eqref{Eqn:ExampleExpansion} is
\bqn \nn
\tilde{K}\lc \sqrt{\frac{1}{2\sigma^{\prime}(0)}}
\frac{\partial^2F}{\partial u_2^2} (0,0) -
F(0,0) \frac{1}{2\sqrt{2}}
\frac{1}{c^2\sigma^{\prime}(0)^{7/2}}\sigma^{\prime\prime\prime}(0)\rc .
\eqn
Since we've already prescribed $\sigma^{\prime}(0)$, we see that there
is a unique (real) value for $\sigma^{\prime\prime\prime}(0)$ that
will make this
expression equal to any given value. Since we are free to prescribe
this value, independently of everything else in  the expression, we do
so in order to get the desired $\beta$ in the expansion. Thus, we have
produced a metric
with the desired properties, proving the theorem.

\paragraph{Acknowledgements.} 
This work has been supported by the European Research Council, ERC StG 2009 \virg{GeCoMethods}, contract number 239748.
{\small
\bibliographystyle{siam}
\bibliography{SRGeneric-Biblio-new.bib}
}


\medskip

\noindent
\textsc{Davide Barilari}\\
{\footnotesize Universit\'e Paris Diderot - Paris 7, Institut de Mathematique de Jussieu, UMR CNRS 7586 -
UFR de Mathématiques. \\ 
CNRS, CMAP \'Ecole Polytechnique and \'Equipe INRIA GECO Saclay \^Ile-de-France, Paris.}\\
{\footnotesize  E-mail:  
{\tt \href{mailto:barilari@math.jussieu.fr}{barilari@math.jussieu.fr}}}\\

\noindent
\textsc{Ugo Boscain}\\
{\footnotesize CNRS, CMAP \'Ecole Polytechnique and \'Equipe INRIA GECO Saclay \^Ile-de-France, Paris. \\ E-mail: {\tt \href{mailto:boscain@cmap.polytechnique.fr}{boscain@cmap.polytechnique.fr}}}\\

\noindent
\textsc{Gr\'egoire Charlot}\\
{\footnotesize Institut Fourier, UMR 5582, Universit\'e Grenoble 1 and \'Equipe INRIA GECO Saclay \^Ile-de-France, Paris.\\ E-mail: {\tt \href{mailto:charlot@ujf-grenoble.fr}{charlot@ujf-grenoble.fr}}}\\

\noindent
\textsc{Robert W. Neel}\\
{\footnotesize Department of Mathematics, Lehigh University, Bethlehem, PA, USA	\\ E-mail: {\tt \href{mailto:robert.neel@lehigh.edu}{robert.neel@lehigh.edu}}}

%
%

\end{document}